\newtheorem{theorem}{Theorem}[section]
\newtheorem{lemma}[theorem]{Lemma}
\newtheorem{corollary}[theorem]{Corollary}
\newtheorem{proposition}[theorem]{Proposition}
\theoremstyle{definition}
\newtheorem{definition}[theorem]{Definition}
\newtheorem{example}[theorem]{Example}
\newtheorem{question}[theorem]{Question}
\theoremstyle{remark}
\newtheorem{remark}[theorem]{Remark}
\renewcommand\k{{\mathrm{k}}}
\newcommand\Bir{{\mathrm{Bir}}}
\newcommand\dom{\mathrm{dom}}
\newcommand\car{{\mathrm{char}}}
\newcommand\Aut{{\mathrm{Aut}}}
\newcommand\Hilb{{\mathrm{Hilb}}}
\newcommand\p{{\mathbb P}}
\newcommand\F{{\mathbb F}}
\newcommand\A{{\mathbb A}}
\newcommand\id{{\mathrm{id}}}
\numberwithin{equation}{section}
\begin{document}

\title{Algebraic structures of groups of birational transformations}

\author{J\'er\'emy Blanc}
\address{J\'er\'emy Blanc, Universit\"{a}t Basel, Departement Mathematik und Informatik, Spiegelgasse $1$, CH-$4051$ Basel, Switzerland.}
\email{jeremy.blanc@unibas.ch}
\thanks{The author gratefully acknowledges support by the Swiss National Science Foundation Grants  "Birational Geometry" PP00P2\_153026 and "Algebraic subgroups of the Cremona groups" 200021\_159921.}

\subjclass[2010]{14E07, 14L30}

\keywords{Birational transformations, representability of functors, Zariski topology, algebraic groups}

\begin{abstract}A priori, the set of birational transformations of an algebraic variety is just a group. We survey the possible algebraic structures that we may add to it, using in particular  parametrised family of birational transformations.\end{abstract}

\maketitle

\section{Introduction}
Let $X$ be an algebraic variety defined over an algebraically closed field~$\k$. We denote by $\Bir(X)$ the group of birational transformations of $X$, and by $\Aut(X)$ its subgroup of automorphisms (biregular morphisms).

If $X$ is projective, it is known that $\Aut(X)$ has a natural structure of group scheme, maybe with infinitely many components (\cite{Mat58}, see also \cite{MO67,Han87}). In particular, it is a scheme of finite dimension.

This is false in general for $\Bir(X)$, which can be much larger. In this note, we give a survey on the following question:
\begin{center}
\textit{What kind of algebraic structure can we put on $\Bir(X)$?}\end{center}

\bigskip

As usual in algebraic geometry, even if one does not know the structure of $\Bir(X)$, one can define what is a morphism $A\to \Bir(X)$, where $A$ is an algebraic variety, or more generally a locally noetherian scheme (see~\S\ref{Sec:Func}). This corresponds to a functor 
\[\left(\text{locally noetherian schemes}\right)\to \left(\text{Sets}\right),\]
introduced by M.~Demazure \cite{De}, which is unfortunately not representable by a scheme, or more generally an ind-scheme, as we explain in \S\ref{Sec:Ind}. The functor is representable for $\Aut(X)$, if $X$ is projective, and gives the classical group scheme structure explained before. It is also representable if $X$ is affine, but by an ind-algebraic group (see \S\ref{Sec:Ind}). 

Even if we do not know what kind of structure one can put on $\Bir(X)$, the morphisms introduced define a Zariski topology on $\Bir(X)$, as explained by J.-P.~Serre in \cite{Se}. We recall this topology in $\S\ref{SubSec:ZarTop}$, and describe some of its properties. We then finish Section~\ref{Sec:Structure} by recalling what is usually called algebraic subgroup of $\Bir(X)$, and by explaining the relation with the topology and the functors/morphisms defined.

Section~\ref{Sec:Flat} consists in looking at a sub-functor of the above one, introduced by M.~Hanamura in \cite{Han87}. It corresponds to flat families of birational transformations, and has the advantage of being representable by a scheme (\S\ref{Sec:FlatDef}). The structure is compatible with the composition and behaves quite well if the variety $X$ is not uniruled (\S\ref{Sec:ReprFlat}). This is however not the case if $X$ is a general algebraic variety. We briefly describe the case where $X=\p^n$ in $\S\ref{Sec:BirPnflat}$.\\

The author thanks Michel Brion and Jean-Philippe Furter for interesting discussions during the preparation of the article.
\section{Structures given by families of transformations}\label{Sec:Structure}
\subsection{Functors $\Bir_X$ and $\Aut_X$}\label{Sec:Func}
In \cite{De}, M.~Demazure introduced the following functor (that he called $\mathrm{Psaut}$, for pseudo-automorphisms, the name he gave to birational transformations):
\begin{definition}\label{Defi:BirX}
Let $X$ be an irreducible algebraic variety and $A$ be a locally noetherian scheme. We define
\[\begin{array}{lll}
\Bir_X(A)&=&\left\{\begin{array}{l}
A\text{-birational transformations of }A\times X\text{ inducing an}\\
\text{ isomorphism }U\to V,\text{ where }U,V\text{ are open subsets}\\
\text{ of }A\times X,\text{ whose projections on }A\text{ are surjective }\end{array}\right\},\\
\Aut_X(A)&=&\left\{\begin{array}{l}
A\text{-automorphisms of }A\times X\end{array}\right\}=\Bir_X(A)\cap \Aut(A\times X).\end{array}\]
\end{definition}
The above families were also introduced and studied before in \cite{Ram64}, at least for automorphisms.
Definition~\ref{Defi:BirX} implicitly gives rise to the following notion of families, or morphisms $A\to \Bir(X)$ (as in \cite{Se,Bl10,BF13,PanRit}):
\begin{definition}\label{Defi:Morphisms}
Taking $A,X$ as above, an element $f\in \Bir_X(A)$ and a $\k$-point $a\in A(\k)$, we obtain an element $f_a\in \Bir(X)$ given by $x\dasharrow p_2(f(a,x))$, where $p_2\colon A\times X\to X$ is the second projection. 

The map $a\mapsto f_a$ represents a map from $A$ $($more precisely from the $A(k)$-points of $A)$ to $\Bir(X)$, and will be called a \emph{morphism} from $A$ to $\Bir(X)$.
\end{definition}
\begin{remark}
We can similarly define morphisms $A\to \Aut(X)$, and observe that these are exactly the morphisms $A\to \Bir(X)$ having image in $\Aut(X)$.
\end{remark}
\begin{remark}
If $X,Y$ are two irreducible algebraic varieties and $\psi\colon X\dasharrow Y$ is a birational map, the two functors $\Bir_X$ and $\Bir_Y$ are isomorphic, via $\psi$. In other words, morphisms $A\to \Bir(X)$ corresponds, via $\psi$, to morphisms $A\to \Bir(Y)$. 

If $\psi$ is moreover an isomorphism, then it also induces an isomorphism between the two functions $\Aut_X$ and $\Aut_Y$. Equivalently, morphisms $A\to \Aut(X)$ corresponds, via $\psi$, to morphisms $A\to \Aut(Y)$. 
\end{remark}

As we will see, the functor $A\to \Bir_X(A)$ is not representable by a scheme, if $X$ is a general algebraic variety (for example if $X=\p^2$).\\

Firstly, taking $X=\p^2$, one can construct very large families:
\begin{example}
For each $m\ge 1$, the following $\A^m$-birational map of $\A^m\times \p^2$
\[\begin{array}{rcl}
\A^m\times \p^2&\dasharrow &\A^m\times \p^2\\
\left((a_1,\dots,a_m), [x:y:z]\right) &\mapsto & \left((a_1,\dots,a_m), \left[xz^{m-1}:yz^{m-1}+\sum\limits_{i=1}^m a_i x^iz^{m-i}:z^{m}\right]\right) \end{array}\]

which restricts, on the open subset where $z=1$, to the automorphism
\[\begin{array}{rcl}
\A^m\times \A^2&\to &\A^m\times \A^2\\
\left((a_1,\dots,a_m), (x,y)\right) &\mapsto & \left((a_1,\dots,a_m), \left(x,y+\sum\limits_{i=1}^m a_i x^i\right)\right) \end{array}\]
yields injective morphisms $\A^m\to \Bir(\p^2)$ and $\A^m\to \Aut(\A^2)$, whose image contains the identity.
\end{example}
Of course, the same kind of example generalises to $\p^n$ and $\A^n$ for any $n\ge 2$. It shows that neither $\Bir(\p^2)$ nor $\Aut(\A^2)$ can be endowed with the structure of a locally noetherian scheme, compatible with the above families / morphisms, or equivalently says that the functor $\Bir_{\p^2}$ and $\Aut_{\A^2}$ are not representable by a locally noetherian scheme. 
\subsection{Ind-varieties and ind-groups}\label{Sec:Ind}
One way to avoid the problem of noetherianity consists of studying ind-schemes, which are inductive limits of locally noetherian schemes. One of the first articles in this direction is \cite{Sha1}, which introduces the notion of "infinite dimensional algebraic varieties", or simply ind-variety, as given by a formal inductive limit of  closed embeddings of algebraic varieties $X_i\hookrightarrow X_{i+1}$. 
\begin{definition}
An \emph{ind-scheme} (resp.~\emph{ind-variety}) is given by a countable union $(X_i)_{i\in\mathbb{N}}$ of schemes (resp.~algebraic varieties) together with closed embeddings $X_i\hookrightarrow X_{i+1}$.

A \emph{morphism} between two ind-schemes $(X_i)_{i\in\mathbb{N}}$ and $(Y_i)_{i\in\mathbb{N}}$ is given by a collection of morphisms $\rho_i \colon X_i \to Y_{j_i}$, where $\{j_i\}_{i\in \mathbb{N}}$ is a sequence of indices, which is compatible with the inclusions.
\end{definition}

The aim of this construction was to study the groups $\Aut(\A^n)$, which are ind-algebraic varieties, as shown in \cite{Sha2}. The group structure being compatible, the groups $\Aut(\A^n)$ are then shown to be ind-algebraic groups (see again \cite{Sha2}), even if the $X_i$ are not subgroups. One can moreover observe that this structure gives the representability of the functor $\Aut_{\A^n}$ by an ind-algebraic group \cite[Lemma 2.7]{Bl15}.

More generally, for any affine algebraic variety $X$, the group $\Aut(X)$ can be seen as an ind-group \cite{KM2005}. This again gives  the representability of the functor $\Aut_{X}$ by an ind-algebraic group (see \cite[Theorem 3.3.3]{KM2005} or \cite{FurKra}).

\bigskip

After having introduced this new category, the natural question to ask is wether the functor $\Bir_X$ can always be represented by an ind-scheme. This what I.R. Shafarevich asked in  \cite[\S 3]{Sha1}:
"Can one introduce a universal structure of an infinite-dimensional group
in the group of all automorphisms (resp. all birational automorphisms)
of arbitrary algebraic variety?"\\

The answer, given in \cite{BF13}, is negative, and can again been shown explicitly for the case of $\p^2$. The problem does not come from the infinite dimension but from the degenerations of birational maps of high degree to maps of smaller degree. Let us give the following example (\cite[Example 3.1]{BF13}):
\begin{example}\label{TheExample}
Let $\hat{V}=\p^2\backslash\{[0:1:0],[0:0:1]\}$, let $\rho\colon \hat{V}\to \Bir(\p^2)$ be the morphism given by 
\[\begin{array}{rcl}
\hat{V}\times \p^2&\dasharrow &\hat{V}\times \p^2\\
\left([a:b:c], [x:y:z]\right) &\mapsto & \left([a:b:c], \left[x(ay+bz):y(ay+cz):z(ay+cz)\right]\right) \end{array}\]
and define $V\subseteq \Bir(\p^2)$ to be the image of $\rho$.
The map ${\rho}\colon\hat{V}\to V$ sends the line $L\subseteq \hat{V}$ corresponding to $b=c$ to the identity,
and induces a bijection $\hat{V}\backslash L \to V\backslash \{\id\}$.
\end{example}
\begin{remark}
The above map corresponds, on the affine plane where $z=1$, to
$$\begin{array}{rccl} \hat{V}\times \A^2&\dasharrow& \hat{V}\times \A^2,\\
\left([a:b:c],(x,y)\right)&\mapsto& \left([a:b:c],(x\cdot \frac{ay+b}{ay+c},y)\right).\end{array}$$
\end{remark}

With this example, one can see that the structure of $V\subset\Bir(\p^2)$ should be the quotient of $\hat{V}\to V$, i.e.~the the quotient of $V$ modulo the equivalence relation that identifies all points of $L$ \cite[Lemma 3.3]{BF13}. As this line is equivalent to any other general line, the structure obtained is not the one of an algebraic variety, or even of an algebraic space. It shows that $\Bir_{\p^2}$ is not representable by an ind-variety, or even an ind-algebraic space or ind-algebraic stack \cite[Proposition 3.4]{BF13}. We summarise it here:
\begin{theorem}$($\cite[Theorem 1]{BF13}$)$
For each $n\ge 2$, there is no structure of ind-algebraic variety $($or algebraic variety$)$ on $\Bir(\p^n)$, such that morphisms $A\to \Bir(\p^n)$ correspond to the morphisms of ind-algebraic varieties $A\to \Bir(\p^n)$.
\end{theorem}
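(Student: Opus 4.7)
The plan is to derive a contradiction from Example \ref{TheExample}. First, I would reduce to $n = 2$: the group monomorphism $\Bir(\p^2) \hookrightarrow \Bir(\p^n)$ sending $\phi$ to its extension acting as the identity on the last $n-2$ coordinates (via a birational identification $\p^n \dasharrow \p^2 \times \p^{n-2}$) is compatible with the functors $\Bir_{\p^2}$ and $\Bir_{\p^n}$, so the family $\rho$ of Example \ref{TheExample}, viewed inside $\Bir(\p^n)$, produces exactly the same obstruction as it does inside $\Bir(\p^2)$; hence one may assume $n = 2$.

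Suppose for contradiction that $\Bir(\p^2)$ carries a structure of ind-algebraic variety (or more generally of ind-algebraic space, or ind-algebraic stack) for which the morphisms of Definition \ref{Defi:Morphisms} are exactly the morphisms of ind-varieties into $\Bir(\p^2)$. The morphism $\rho \colon \hat{V} \to V \subseteq \Bir(\p^2)$ of Example \ref{TheExample} is then a surjective morphism of ind-varieties, bijective on $\hat{V} \setminus L$ and sending every point of $L$ to the single point $\id \in V$.

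The central step is a rigidity statement: the induced structure on $V$ near $\id$ is forced to agree with the set-theoretic quotient $\hat{V}/\!\!\sim$ that identifies all of $L$ to a single point. Indeed, any morphism of ind-varieties $V \to W$ pulls back via $\rho$ to a morphism $\hat{V} \to W$ constant on $L$, and conversely any such map can be reconstructed from the composition of families given by Definition \ref{Defi:BirX} and the group law on $\Bir(\p^2)$; this is the content of \cite[Lemma 3.3]{BF13}. The quotient $\hat{V}/\!\!\sim$ does not, however, exist in the category of ind-algebraic varieties, spaces, or stacks of finite type: the contracted curve $L$ is a smooth rational curve with positive self-intersection $L^2 = 1$ inside the ambient smooth surface $\hat{V} \subset \p^2$, and such curves are non-contractible by the Castelnuovo--Artin criteria (together with their extensions to algebraic spaces and Artin stacks). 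These two facts together contradict the assumed structure.

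The main obstacle is the rigidity step, namely establishing that the structure on $V$ near $\id$ is really forced to be the quotient $\hat{V}/\!\!\sim$ in a sense strong enough to rule out all three candidate categories at once. This requires exhibiting, for every test morphism $\hat{V} \to W$ constant on $L$, a unique corresponding morphism $V \to W$; the construction exploits the closure of the class of families under composition with elements of $\Bir(\p^2)$, along the lines of \cite[Lemma 3.3 and Proposition 3.4]{BF13}. Once this rigidity is available, the non-contractibility of a $\p^1$ with positive self-intersection in a smooth ambient surface is a classical local fact.
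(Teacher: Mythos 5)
Your overall architecture is the one the paper follows (via \cite{BF13}): take the family of Example~\ref{TheExample}, invoke the rigidity statement \cite[Lemma 3.3]{BF13} to force the structure on $V$ near $\id$ to be the quotient of $\hat{V}$ collapsing $L$, and then show that this quotient cannot exist in the target category. (Your reduction to $n=2$ is acceptable, though it is simpler to note that the same family, extended by the identity in the remaining coordinates, is already an element of $\Bir_{\p^n}(\hat{V})$, so the argument runs directly in $\Bir(\p^n)$ without needing the subgroup $\Bir(\p^2)\subset\Bir(\p^n)$ to carry an induced structure.)

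The gap is in your final step. The Castelnuovo and Artin criteria govern \emph{proper} birational contractions: negative definiteness of the intersection matrix is necessary for the curve contracted by a proper birational morphism onto a normal surface, and the standard proof of that necessity already presupposes a non-constant function vanishing at the image point. Here $\hat{V}=\p^2\setminus\{[0:1:0],[0:0:1]\}$ is not proper, the hypothetical target is only an ind-algebraic variety, space or stack, and the quotient map is not known to be proper, so these criteria do not apply as stated; the number $L^2=1$ is not, by itself, the operative obstruction. What actually closes the argument --- and what the paper uses --- is B\'ezout in $\p^2$: since the two removed points do not lie on $L$, every positive-dimensional closed subset of $\hat{V}$ meets $L$, so every open subset of $\hat{V}$ containing $L$ has finite complement in $\p^2$ and hence, by normality of $\p^2$, carries only constant regular functions. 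Consequently the local ring of $V$ at $\id$ would reduce to $\k$, and every positive-dimensional closed subset of $V$ would pass through $\id$; neither is possible for a locally noetherian (ind-)scheme, algebraic space or algebraic stack that is positive-dimensional at that point. Replacing your contractibility citation by this local-ring and topology argument repairs the proof and is exactly the route of \cite[Proposition 3.4]{BF13}.
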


Despite of this, it could be interesting to study equivalence classes on algebraic varieties. If the relation is closed and \'etale, one obtains an algebraic space \cite[Definition 2.3]{Artin}. One could then seek for generalisations of this, by admitting non-\'etale equivalence relations, like the one induced by the above example. It would however introduce some pathologies: the local ring at the special point of $\mathrm{id}\in V$ corresponds to functions defined on a open set of $\hat{V}$ containing $L$ and would then be the ring of constant functions. This gives rise to the following question:
\begin{question}
Can we enlarge the category of ind-scheme to a "not too nasty" category in order to be able to represent the functor $\Bir_{\p^2}$ ? $($or $\Bir_X$ in general$)$ ?
\end{question}

Another question would be to determine the varieties $X$ for which $\Bir_X$ can be represented by an ind-scheme. In particular, the following question arises:
\begin{question}
Is there an algebraic variety $X$ such that $\Bir_X$ can be represented by an ind-scheme, but not by a group scheme?
\end{question}

\subsection{Group structure and Zariski topology on $\Bir(X)$}\label{SubSec:ZarTop}
Note that the inverse map yields an isomorphism of functors from $\Bir_X$ to itself. Similarly, we can define a functor $\Bir_X\times \Bir_X$, in the same way as for $\Bir_X$, and then observe that the composition is a morphism of functors. The notion of families given by $\Bir_X$ is then compatible with the group structure.

Even if $\Bir_X$ is not representable, we can define a topology on the group $\Bir(X)$, given by this functor. This topology was called \emph{Zariski topology} by J.-P. Serre in \cite{Se}:
\begin{definition}  \label{defi: Zariski topology}
Let $X$ be an algebraic variety. A subset~$F\subseteq \Bir(X)$ is \emph{closed in the Zariski topology}
if for any algebraic variety~$A$ $($or more generally any locally noetherian algebraic scheme$)$ and any morphism~$A\to \Bir(X)$ the preimage of~$F$ is closed.
\end{definition}
In the case where $\Bir_X$ is represented by an algebraic group, then the above topology is compatible with the Zariski topology of the algebraic group. Moreover, even if $\Bir(X)$ is not an algebraic group, then its topology and group structure behave not so far from algebraic groups. For instance, we can define the Zariski topology on $\Bir(X)\times \Bir(X)$, using morphisms as above, and check that the composition law yields a continuous map $\Bir(X)\times \Bir(X)\to \Bir(X)$. Moreover, the map sending an element on its inverse is a homeomorphism $\Bir(X)\to \Bir(X)$. Similarly, taking powers, left and right-multiplications and conjugation are homeomorphisms (see for example \cite[Lemma 2.3]{Bl14}). 
Using such properties, one can see for instance that the closure of a subgroup is again a subgroup, and that the closure of an abelian subgroup (for example a cyclic group) is abelian.\\

For $n\ge 2$, the Zariski topology of $\Bir(\p^n)$ is not the one of any algebraic variety, or even ind-variety \cite[Theorem 2]{BF13}. The obstruction follows from the bad topology of the set $V$ constructed in Example~\ref{TheExample}: it contains a point where all closed subsets of positive dimension pass through. 

However, we can describe the topology of $\Bir(\p^n)$, using maps of low degree.
\begin{definition}\label{Def:Degree}
For each $\varphi\in \Bir(\p^n)$, the \emph{degree} of $\varphi$ is the degree $\deg(\varphi)$ of the pull-back of a general hyperplane. Equivalently, it is the degree of the polynomial that define $\varphi$, when these are taken without common factor.

We define by $\Bir(\p^n)_d$ (respectively by $\Bir(\p^n)_{\le d}$)  the set of elements of $\Bir(\p^n)$ of degree exactly $d$ (respectively of degree $\le d)$.
\end{definition}
\begin{remark}
We have $\Bir(\p^n)_1=\Bir(\p^n)_{\le 1}=\Aut(\p^n)$.
\end{remark}
We can first remark that $\Bir(\p^n)_{\le d}$ is closed in $\Bir(\p^n)$ for each $d$ \cite[Corollary 2.8]{BF13}. This is the semi-continuity of the degree, which was also proved in \cite[Lemma 4.1]{Xie15} for arbitrary surfaces. Then, the topology of $\Bir(\p^n)$ can be deduced from its subsets of bounded degree:

\begin{lemma}\cite[Proposition 2.10]{BF13} The topology of $\Bir(\p^n)$ is the inductive limit topology given by the Zariski topologies of $\Bir(\p^n)_{\le d}$, $d\in \mathbb{N}$, which are the quotient topology of $\pi_d\colon H_d\to\Bir(\p^n)_{\le d}$, where $H_d$ is an algebraic variety, endowed with its Zariski topology.\end{lemma}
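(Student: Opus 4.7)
The plan is to split the statement into two claims: first, that the Zariski topology on $\Bir(\p^n)$ is the inductive limit of the subspace topologies of the $\Bir(\p^n)_{\le d}$; second, that each of these subspace topologies coincides with the quotient topology of a surjection $\pi_d \colon H_d \to \Bir(\p^n)_{\le d}$ from an explicit algebraic variety $H_d$. The inductive-limit claim rests on the semi-continuity of degree recalled just before the lemma. For any morphism $\rho\colon A \to \Bir(\p^n)$ from a locally noetherian scheme, each preimage $\rho^{-1}(\Bir(\p^n)_{\le d})$ is closed in $A$, and on any noetherian open the increasing sequence of these closed subsets stabilises, so $\rho$ factors locally through some $\Bir(\p^n)_{\le d}$. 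Since every $\Bir(\p^n)_{\le d}$ is itself closed in $\Bir(\p^n)$, I obtain that $F \subseteq \Bir(\p^n)$ is closed if and only if every $F \cap \Bir(\p^n)_{\le d}$ is closed in the subspace topology.

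To construct $H_d$, I would take the projective space $W_d$ parametrising, up to a common scalar, $(n+1)$-tuples of homogeneous polynomials of degree $d$ in $n+1$ variables, and let $H_d \subseteq W_d$ be the locally closed subvariety of tuples whose associated rational map $\p^n \dasharrow \p^n$ is birational (after cancellation of common factors). This is the intersection of the open dominance condition with the closed condition that the generic fibre of the induced map is a single point. The map $\pi_d \colon H_d \to \Bir(\p^n)_{\le d}$ sending a tuple to the birational transformation it defines is surjective: any $\varphi \in \Bir(\p^n)$ of degree $e \le d$ is represented by a tuple of degree $e$, which becomes a point of $H_d$ after multiplication by an arbitrary polynomial of degree $d-e$.

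For the quotient-topology identification, both inclusions need checking. In one direction, the universal $(n+1)$-tuple over $H_d$ provides a canonical element of $\Bir_{\p^n}(H_d)$, so $\pi_d$ is itself a morphism in the sense of Definition~\ref{Defi:Morphisms}; preimages of Zariski-closed subsets of $\Bir(\p^n)_{\le d}$ are therefore closed in $H_d$, showing that the subspace topology is at least as coarse as the quotient topology. In the other direction, I would start from a morphism $\rho \colon A \to \Bir(\p^n)_{\le d}$ encoded by some $f \in \Bir_{\p^n}(A)$, stratify $A$ by the degree of the fibre (a finite stratification, by semi-continuity), trivialise the relevant line bundle on small enough affine opens, and write $f$ on each piece as an explicit $(n+1)$-tuple of polynomials of some uniform degree $d' \le d$; multiplication by a polynomial of degree $d - d'$ then yields a morphism from each such open into $H_d$ that recovers $\rho$ after composition with $\pi_d$. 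A subset $F \subseteq \Bir(\p^n)_{\le d}$ with $\pi_d^{-1}(F)$ closed therefore has closed preimage in each open, and these glue to show that $\rho^{-1}(F)$ is closed in $A$.

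The step I expect to be the main obstacle is this last one: the lifts $A \to H_d$ exist only locally on $A$ and are non-canonical, due to scaling ambiguity, the choice of line-bundle trivialisation, and the degree of common factors across the stratification where the fibre degree of $f$ drops. Ensuring that the local lifts glue into a coherent closedness argument is the delicate combinatorial point; once handled, the remainder reduces to the existence of the universal family on $H_d$ and to the surjectivity of $\pi_d$, both of which are essentially formal.
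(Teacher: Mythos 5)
The paper gives no proof of this lemma---it is quoted from \cite{BF13}, Proposition 2.10---so the relevant comparison is with the argument there, and your overall route is the same one: semicontinuity of the degree for the inductive-limit structure, the parameter variety $H_d$ of $(n+1)$-tuples of degree-$d$ forms carrying a universal family for one inclusion, and local lifting of morphisms through $\pi_d$ for the other.

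There is, however, one step that as written would fail, and it is not the one you flag. In the last part you propose to \emph{stratify} $A$ by the degree of $\rho_a$ and to produce lifts to $H_d$ on pieces of this stratification. Knowing that $\rho^{-1}(F)\cap S$ is closed in $S$ for every locally closed stratum $S$ does not imply that $\rho^{-1}(F)$ is closed in $A$, so lifts defined stratum by stratum prove nothing. What is actually needed, and what \cite{BF13} establishes, is that the lifts exist on an honest \emph{open} cover of $A$: on a small enough open subset $A_i$ the family $f$ is given by a single $(n+1)$-tuple of forms of the generic degree $d_i$ of $f$ on $A_i$, and at the points of $A_i$ where the degree drops this \emph{same} tuple automatically acquires a common factor rather than requiring a new expression; multiplying all coordinates by $x_0^{\,d-d_i}$ then yields the lift $A_i\to H_d$. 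Once the lifts live on an open cover, the ``delicate gluing'' you single out evaporates: closedness of a subset of $A$ is local on $A$, and the non-canonicity of the lifts is irrelevant. The genuinely non-formal inputs are elsewhere, namely (i) the existence of these local expressions on opens (your line-bundle argument, carried out so that it does not degenerate into a statement about strata), (ii) the fact that $H_d$ is a locally closed subvariety of the projective space of tuples---the closedness of birationality on the dominant locus is itself a small semicontinuity argument, not a tautology---and (iii) the boundedness of the degree of a morphism from a noetherian scheme: an increasing chain of closed subsets of a noetherian space covering the closed points need not stabilise, so your stabilisation claim should instead be argued via the generic point of each irreducible component of $A$ (the degree there bounds, by semicontinuity, the degree on the whole component). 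All three points are exactly the lemmas preceding Proposition 2.10 in \cite{BF13}.
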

The algebraic varieties $H_d$ are given by $(n+1)$-uples of homogeneous polynomials of degree $d$ inducing birational maps. The map $\pi_d\colon H_d\to\Bir(\p^n)_{\le d}$ restricts then to a bijection on $(\pi_d)^{-1}(\Bir(\p^n)_{d})$, but not on maps of smaller degree, that can be represented in many different ways in $H_d$, by multiplying each coordinate by the same factor. These distinct possible factors are responsible of the fact that the Zariski topology of $\Bir(\p^n)_{\le d}$ is not the one of an algebraic variety.\\

Note that $\Bir(\p^n)$ is connected for each $n$ \cite{Bl10}, and that $\Bir(\p^2)_{d}$  is connected for $d\le 6$ \cite{BCM}. Moreover, $\Bir(\p^2)$ does not contain any closed normal subgroup \cite{Bl10}, even if it is not simple, viewed as an abstract group \cite{CL13}.\\

The Zariski topology of $\Bir(X)$, for an arbitrary algebraic variety $X$, it still not well understood.

\subsection{Algebraic subgroups}\label{AlgSubgroups}
Studying biregular actions of algebraic groups on algebraic varieties is a very classical subject of algebraic geometry. More generally, one can study rational actions of algebraic groups. This was done for example in \cite{Wei55,Ros56}. Using the notion of morphism $A\to \Bir(X)$ of Definition~\ref{Defi:Morphisms}, the algebraic actions and algebraic subgroups of $\Bir(X)$ can be naturally defined:
\begin{definition}
Let $X$ be an irreducible algebraic variety and $G$ be an algebraic group. A \emph{birational group action} (respectively \emph{biregular group action}) of $G$ on $X$ is a morphism $G\to \Bir(X)$ (respectively $G\to \Aut(X)$) which is also a group homomorphism. The image of this morphism is a subgroup of $\Bir(X)$ (respectively of $\Aut(X)$) which is called \emph{algebraic subgroup}.
\end{definition}

Note that any birational map $X\dasharrow Y$ conjugate birational group actions on $X$ to birational group actions on $Y$. This allows sometimes to obtain biregular group actions: 

\begin{theorem}$($\cite[Theorem page 375]{Wei55}, \cite[Theorem 1]{Ros56}$)$
Let $X$ be an irreducible algebraic variety, $G$ be an algebraic group and $G\to \Bir(X)$ a birational group action.
Then, there exists a birational map $X\dasharrow Y$, where $Y$ is another algebraic variety, that conjugates this action to a biregular group action.
\end{theorem}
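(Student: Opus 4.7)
The plan is to carry out Weil's classical construction: modify $X$ to a variety $Y$ by gluing finitely many birational translates, so that the induced $G$-action becomes everywhere regular.

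First I would unpack the data. A birational group action $G\to \Bir(X)$ corresponds, via Definition~\ref{Defi:Morphisms}, to a $G$-birational transformation of $G\times X$, equivalently to a rational map $\mu\colon G\times X\dasharrow X$ defined on a dense open subset projecting surjectively onto $G$, and satisfying $\mu(gh,x)=\mu(g,\mu(h,x))$ as a birational identity. For generic $g\in G$ the slice $\mu_g\colon X\dasharrow X$ lies in $\Bir(X)$, and $\mu_{gh}=\mu_g\circ \mu_h$ wherever both sides are defined.

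Next, I would construct $Y$ as follows. Pick finitely many elements $g_1,\dots,g_n\in G$ in sufficiently general position, take $n$ copies $X_1,\dots,X_n$ of $X$, and glue $X_i$ to $X_j$ along the domain of the birational map $\phi_{ij}:=\mu_{g_i^{-1}g_j}\colon X_j\dasharrow X_i$. The cocycle identity $\phi_{ij}\circ\phi_{jk}=\phi_{ik}$ follows from the group law for $\mu$, so these identifications produce a prevariety $Y$, together with a birational map $\psi\colon X\dasharrow Y$ coming from the inclusion of the first chart. A $G$-action on $Y$ is then defined chartwise by $g\cdot y := \mu_{g_j^{-1}gg_i}(y)\in X_j$ for $y\in X_i$, whenever this map is defined at $y$ and lands in $X_j$; the cocycle condition makes the result independent of $j$.

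The main obstacle is to choose the $g_i$'s so that this action is defined at every pair $(g,y)\in G\times Y$. The indeterminacy locus $\mathrm{Ind}(\mu_h)\subsetneq X$ is a proper closed subset for each generic $h$, and the global bad locus $\{(h,x):x\in \mathrm{Ind}(\mu_h)\}\subset G\times X$ is closed with general fiber a proper closed subset of $X$. Hence for any fixed $y$, the set of $h\in G$ with $y\notin \mathrm{Ind}(\mu_h)$ is dense open in $G$. A noetherian induction then lets one enlarge the list $\{g_i\}$ step by step, each new $g_i$ shrinking the closed locus in $G\times Y$ of pairs $(g,y)$ for which no chart makes the action defined, until it becomes empty. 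Separatedness of $Y$ follows from the fact that each overlap is glued by a birational isomorphism of open subsets (one may pass to suitable open refinements to ensure the graph of each $\phi_{ij}$ is closed). By construction, $\psi$ then conjugates the original birational action on $X$ to the regular action $G\to \Aut(Y)$ so obtained.
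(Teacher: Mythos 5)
The paper states this result as a classical theorem cited from Weil and Rosenlicht and gives no proof, so there is nothing internal to compare against; your attempt must be judged against the classical regularization argument, whose overall architecture (gluing finitely many birational translates of $X$ and defining the action chartwise) you do reproduce correctly.

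There is, however, a genuine gap at the step you rely on most heavily. You argue: the bad locus $\{(h,x): x\in \mathrm{Ind}(\mu_h)\}\subset G\times X$ is closed with general fibre over $G$ a proper closed subset of $X$, \emph{hence} for any fixed $y\in X$ the set of $h\in G$ with $y\notin\mathrm{Ind}(\mu_h)$ is dense open in $G$. This is a non sequitur: an open subset of $G\times X$ whose projection to $G$ is surjective (even dominant fibrewise over $G$) can have empty slice over a fixed point $y$ of $X$ (already $\{(h,x)\in\A^1\times\A^1 : x\neq 0\}$ and $y=0$ show this). So for a given $y$ the set of $h$ moving $y$ is open but a priori possibly empty, and for such ``universally bad'' points no chart of your $Y$ ever makes the action defined; your noetherian induction then cannot terminate. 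The classical proof handles this by first proving a separate lemma: the set $X_0$ of points $y$ such that $\mu_h$ is defined at $y$ for all $h$ in a dense open subset of $G$ (and such that the associativity identity $\mu_{gh}(y)=\mu_g(\mu_h(y))$ holds for generic $(g,h)$) is a dense \emph{open} subset of $X$; one replaces $X$ by $X_0$ before gluing. Proving this uses the group law in an essential way, via the factorization $\mu_h=\mu_{hk^{-1}}\circ\mu_k$ for generic $k$, and is not a formal consequence of constructibility. Two further points are treated too lightly: the termination of your induction requires showing that a \emph{general} new translate $g_i$ strictly shrinks the bad locus (a generic-point/constructibility argument, not automatic), and separatedness of the glued prevariety $Y$ is a real issue in this theorem --- Weil and Rosenlicht work with abstract varieties, and obtaining a separated (indeed quasi-projective) model requires an additional argument rather than ``passing to open refinements.''
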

In this theorem, we can moreover assume $Y$ to be projective, using equivariant completions (see \cite{Sum74}). In particular, studying connected rational algebraic actions on a variety $X$ amounts to study the connected components of the group scheme $\Aut(Y)$, where $Y$ is a projective algebraic variety $Y$ birational to $X$. This allows for instance to show that maximal connected subgroups of $\Bir(\p^2)$ are $\Aut(\p^2)$, $\Aut(\p^1\times \p^1)^\circ$, $\Aut(\F_n)$, $n\ge 2$.\\

One can characterise the algebraic subgroups of $\Bir(\p^n)$ only using the Zariski topology defined in $\S\ref{SubSec:ZarTop}$. These are the closed subgroups of bounded degree:
\begin{theorem}$($\cite[Corollary 2.18, Proposition 2.19]{BF13}$)$\label{Thm:AlgGrCremona}
\begin{enumerate}
\item
Every algebraic subgroup of $\Bir(\p^n)$ is closed $($for the Zariski topology$)$ and of bounded degree.
\item
For each closed algebraic subgroup  $G\subset \Bir(\p^n)$ of bounded degree, there is a unique algebraic group structure on $G$, compatible with the group structure of $\Bir(\p^n)$, and such that morphisms $A\to \Bir(\p^n)$ having image in $G$ correspond to morphisms of algebraic varieties $A\to G$.
\end{enumerate}
\end{theorem}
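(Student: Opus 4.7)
The plan is to prove the bounded-degree part of (1), then existence in (2), and finally to deduce closedness in (1) and uniqueness in (2) from the first two. For bounded degree, let $\rho\colon G\to\Bir(\p^n)$ be a morphism from an algebraic group that is also a group homomorphism. By the semi-continuity of the degree noted just before Definition~\ref{Def:Degree}, each $\rho^{-1}(\Bir(\p^n)_{\le d})$ is closed in $G$ and their union over $d$ exhausts $G$. Since $G$ is Noetherian with finitely many irreducible components, this ascending chain stabilizes, yielding a uniform $d_0$ with $\rho(G)\subseteq\Bir(\p^n)_{\le d_0}$.

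For existence of the algebraic-group structure in (2), fix a closed subgroup $G\subseteq\Bir(\p^n)_{\le d}$. Because the topology on $\Bir(\p^n)_{\le d}$ is the quotient of that on $H_d$ under $\pi_d$, the preimage $\tilde G:=\pi_d^{-1}(G)$ is closed in $H_d$, hence a quasi-projective variety. Stratify $G$ by degree: each $G_e:=G\cap\Bir(\p^n)_e$ is locally closed, and $\pi_e$ restricts to a $\mathbb{G}_m$-torsor $\pi_e^{-1}(G_e)\to G_e$ (the $\mathbb{G}_m$-action being common scaling of all $n+1$ coordinate polynomials), which gives $G_e$ the structure of an algebraic variety. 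These strata assemble into a scheme structure on $G$, and the composition morphism $\tilde G\times\tilde G\to H_{d^2}$ arising from the functoriality of $\Bir_{\p^n}$ (and landing in $\pi_{d^2}^{-1}(G)$ since $G$ is closed under multiplication) descends to a regular multiplication $G\times G\to G$; inversion is handled analogously.

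Closedness in (1) then follows: for $\rho$, $H=\rho(G)$ and its closure $\bar H$ in $\Bir(\p^n)$, the continuity of multiplication and inversion recalled in \S\ref{SubSec:ZarTop} shows $\bar H$ is a closed subgroup, contained in $\Bir(\p^n)_{\le d_0}$ and hence of bounded degree. By (2), $\bar H$ carries an algebraic group structure, and the compatibility property identifies $\rho$ with a morphism of algebraic groups $G\to\bar H$. Since the image of a morphism of algebraic groups is a closed subgroup of the target, $H$ is closed in $\bar H$, forcing $H=\bar H$. Uniqueness in (2) is automatic: two compatible algebraic structures $\sigma_1,\sigma_2$ on $G$ would each make the tautological $\id_G$ a morphism $(G,\sigma_1)\to(G,\sigma_2)$ and vice versa, forcing $\sigma_1=\sigma_2$.

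The main obstacle is the existence step of (2): verifying that the stratum-by-stratum variety structures on the $G_e$ glue into one algebraic variety on which composition is regular. The stratification is by locally closed (not closed) subsets, and degrees can drop under composition via cancellation of common factors, so multiplication in $G$ crosses strata in an intricate way; regularity at points where degree drops occur must be checked by leveraging the homogeneity of the group action together with finiteness of the stratification of $\tilde G\subseteq H_d$.
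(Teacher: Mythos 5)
This survey does not reprove the statement: it is quoted from \cite[Corollary 2.18, Proposition 2.19]{BF13} with no proof given here, so your attempt can only be measured against that source. Your overall architecture (bounded degree first, then existence of the structure, then closedness and uniqueness as formal consequences) is sensible, and the last two steps are essentially right: the Yoneda-style uniqueness argument, and the deduction of closedness from the fact that a homomorphism of algebraic groups has closed image, both work once part (2) is available. But there are two genuine gaps. First, in the bounded-degree step you assert that the ascending chain of closed subsets $\rho^{-1}(\Bir(\p^n)_{\le d})$ stabilizes ``since $G$ is Noetherian''. Noetherian spaces satisfy the \emph{descending} chain condition on closed subsets, not the ascending one (consider $\{0\}\subsetneq\{0,1\}\subsetneq\cdots$ in $\A^1$), and an increasing countable union of closed subsets covering all closed points need not stabilize: over $\k=\overline{\F_p}$ the subsets $\F_{p^{d!}}\subset\A^1$ give a counterexample. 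Thus semicontinuity of the degree alone does not yield boundedness. The correct argument goes back to the definition of a morphism via an element $f\in\Bir_{\p^n}(A)$: on a dense open subset of each irreducible component of $A$, the map is given by $(n+1)$ homogeneous forms of a single degree $D$, so $\deg f_a\le D$ there, and one concludes by Noetherian induction (the descending chain condition) on the closed complement.

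Second, and more seriously, the existence half of (2) --- which is the actual content of \cite[Proposition 2.19]{BF13} --- is asserted rather than proved. You stratify $G$ by degree and endow each stratum $G_e$ with a variety structure, but a finite family of locally closed strata does not ``assemble'' into a variety: the naive disjoint union carries the wrong topology, since it would make $G\cap\Aut(\p^n)$ open and closed in $G$, which already fails for $G=\Aut(\p^1\times\p^1)^\circ$ viewed inside $\Bir(\p^2)$, where the degree-one locus is a proper closed, non-open subgroup of a connected group. Producing a single scheme structure on $G$ for which multiplication remains regular across the loci where degrees drop (by cancellation of common factors) is precisely the difficulty of the theorem, and you acknowledge in your final paragraph that this verification is missing. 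As it stands this step is a plan rather than a proof; the descent from the closed subvariety $\pi_d^{-1}(G)\subset H_d$ to $G$ must be carried out explicitly, as is done in \cite{BF13}.
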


There is also a characterisation of connected algebraic subgroups of $\Bir(X)$, for any irreducible algebraic variety $X$:

\begin{theorem}$($\cite{Ram64}$)$
Let $X$ be an irreducible algebraic variety and $G\subset \Aut(X)$ be a subgroup having the following properties:
\begin{enumerate}
\item
$($connectedness$)$ For any $f\in G$, there is a morphism $A\to \Aut(X)$, where $A$ is an irreducible algebraic variety, whose image contains $f$ and the identity.
\item
$($bounded dimension$)$ There is an integer $d$ such that for any injective morphisms $A\to \Aut(X)$ having image contained in $G$, we have $\dim A\le d$.
\end{enumerate}
Then, there is a unique structure of algebraic group on $G$, compatible with the group structure of $\Aut(X)$, such that morphisms $A\to \Aut(X)$ having image into $G$ correspond to morphism of algebraic varieties $A\to G$.
\end{theorem}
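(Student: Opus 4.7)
The plan is to extract a maximal-dimensional irreducible family around $\id$, equip it with a rational group law, regularize it via Weil's classical theorem into a genuine algebraic group, and then propagate the resulting structure from a neighborhood of $\id$ to all of $G$.

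First I would fix the integer $d$ from hypothesis (2) and choose an irreducible algebraic variety $A$ of dimension $d$ together with an injective morphism $\alpha\colon A\to G$. Using hypothesis (1) and the fact that left translation by a fixed element of $\Aut(X)$ is a morphism of the functor $\Aut_X$, I would arrange $\id\in\alpha(A)$. Since composition and inversion are morphisms of functors, the map $\mu\colon A\times A\to G$, $(a,b)\mapsto\alpha(a)\alpha(b)^{-1}$, is itself a morphism; its image has dimension at most $d$ by maximality, so after shrinking to a dense open subset I obtain an irreducible $d$-dimensional variety $V$ with an injective morphism $V\to G$ whose image is a symmetric neighborhood of $\id$.

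Next, I would turn $V$ into a birational group. Multiplication and inversion in $G\subseteq \Aut(X)$ yield rational maps $V\times V\dasharrow V$ and $V\dasharrow V$: both arise as morphisms of $\Aut_X$ into $G$ whose images are constructible of dimension at most $d$ by maximality, and therefore agree birationally with $V$ (which is irreducible of the same dimension and contains $\id$ as a base point for both). The group axioms hold on a dense open subset because they already hold pointwise in the abstract group $G$. Hence $V$ is a Weil pre-group, and the regularization theorem of Weil in \cite{Wei55} provides an algebraic group $H$ with a birational map $V\dasharrow H$ intertwining the two group laws.

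Finally, I would extend to all of $G$. By hypothesis (1), every $g\in G$ lies in the image of some irreducible family $B\to G$ passing through $\id$, and by maximality this image is contained in $V\cdot V^{-1}\subseteq V^2$; thus $G=\bigcup_{n\ge 1} V^n$. The increasing chain $V\subseteq V^2\subseteq\cdots$ of irreducible constructible subsets has dimension bounded by $d$ (by maximality applied to the composed families $A^{2n}\to G$), so it stabilizes at some $V^n$ which, being symmetric, identity-containing, and closed under multiplication, equals $G$. Transporting the algebraic group structure of $H$ through this identification yields an algebraic group law on $G$ compatible with the original group structure of $\Aut(X)$ and with the universal property on morphisms $A\to G$. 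Uniqueness is immediate from this universal property: two such structures would make the identity $G\to G$ a morphism of algebraic varieties in both directions, hence an isomorphism of algebraic groups. The main obstacle is the middle step: showing rigorously that $V$ fulfils the axioms of a Weil pre-group, which requires a careful dimension-tracking argument to confirm that the induced multiplication and inversion descend to dominant self-maps of $V$ rather than landing in some strictly larger subset of $G$.
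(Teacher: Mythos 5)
The paper itself gives no proof of this statement: it is quoted as a known result and attributed to \cite{Ram64}, so there is nothing internal to compare against. Your strategy --- extract a maximal-dimensional irreducible family through $\id$, show it carries a rational group law, regularize via Weil's theorem on birational group laws, and then spread the structure to all of $G$ --- is essentially the classical route of Ramanujam (and of Weil's group-chunk method), so the overall architecture is the right one.

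That said, as written the argument has two genuine gaps. First, hypothesis (2) bounds $\dim A$ only for \emph{injective} morphisms $A\to \Aut(X)$, whereas the map $\mu\colon A\times A\to G$, $(a,b)\mapsto \alpha(a)\alpha(b)^{-1}$, has no reason to be injective; moreover ``the image has dimension at most $d$'' is not even meaningful, since $G$ carries no variety structure yet and the image is just a subset of an abstract group. To use (2) you must first show that the equivalence relation $\{(a,b,a',b') \mid \alpha(a)\alpha(b)^{-1}=\alpha(a')\alpha(b')^{-1}\}$ is constructible (this follows from the graph description of families, as in Lemma~\ref{Lemm:Graphs}), then choose a locally closed subvariety of $A\times A$ on which $\mu$ is injective with the same image generically, and only then invoke the bound $d$. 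The same caveat applies wherever you speak of the ``dimension'' of $V^n$ or of the image of $B\to G$: every such statement has to be rephrased on the parameter spaces. Second, the final sentence ``Transporting the algebraic group structure \dots yields \dots the universal property on morphisms $A\to G$'' is an assertion, not a proof: given an arbitrary morphism $B\to \Aut(X)$ with image in $G$, one must show that the induced set-map $B\to G$ is a morphism for the transported variety structure. This is a real step (typically done by writing a general element of $G$ as a product of two elements of $V$ and using that $(b,v)\mapsto \beta(b)v$ is again a family dominating $V$), and it is also exactly where uniqueness of the structure comes from; without it the ``compatible with morphisms'' clause of the theorem is unproved. Your own flagged obstacle (that multiplication and inversion descend to dominant rational self-maps of $V$) is indeed a third point needing the same constructibility machinery, but it is the more routine of the three.
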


This nice result gives in particular an algebraic group structure on any algebraic subgroup of $\Bir(X)$ and implies that the Zariski topology induced by $\Bir(X)$ is the Zariski topology of the algebraic group obtained. It also seems that every algebraic subgroup of $\Bir(X)$ is closed, as stated in \cite{Pop13some,Pop13tori}. The case of $\p^n$ is given by Theorem~\ref{Thm:AlgGrCremona} above but we did not find a proof of this statement for a general algebraic variety $X$.
\section{Flat families and scheme structure}\label{Sec:Flat}
\subsection{The functor $\Bir_X^{\mathrm{flat}}$}\label{Sec:FlatDef}
Another way of studying (bi)-rational maps between projective algebraic varieties consists of studying graphs. This was the viewpoint of \cite{Han87}. Let us recall the following basic notions:
\begin{definition}
Let $X,Y$ be irreducible algebraic varieties and  $f\colon X\dasharrow Y$ a rational map. The \emph{graph of $f$} is denoted $\Gamma_f$ and is the closure of \[\{(x,f(x))\mid x\in \dom(f)\}\] in $X\times Y$.
\end{definition}
\begin{lemma}\label{Lemm:Graphs}
Let $X$ be a locally noetherian scheme, and denote by $\pi_i\colon X\times X\to X$ the $i$-th projection, for $i=1,2$. Then, the following maps are bijective:
\[\begin{array}{ccc}
\Bir(X)&\to&\left\{\begin{array}{l}
\text{irreducible closed subsets }Y\subset X\times X\\
\text{such that }\pi_i\colon Y\to X,\text{ is a birational}\\
\text{morphism, for }i=1,2.\end{array}\right\},\\
f&\mapsto &\Gamma_f.\\
\Aut(X)&\to&\left\{\begin{array}{l}
\text{irreducible closed subsets }Y\subset X\times X\\
\text{such that }\pi_i\colon Y\to X,\text{ is an}\\
\text{isomorphism, for }i=1,2.\end{array}\right\},\\
f&\mapsto &\Gamma_f.\end{array}\]
\end{lemma}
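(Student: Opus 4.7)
The plan is to build the inverse of $f\mapsto \Gamma_f$ explicitly, and then check that the two assignments are mutually inverse. The argument is entirely parallel in the birational and biregular cases, so I would outline it carefully for $\Bir(X)$ and indicate the minor changes for $\Aut(X)$ at the end.

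First I would verify the forward direction. Given $f\in \Bir(X)$, the map $\dom(f)\to X\times X$, $x\mapsto (x,f(x))$, is an immersion onto a locally closed irreducible subset isomorphic to $\dom(f)$; its closure $\Gamma_f$ is therefore an irreducible closed subset of $X\times X$. The first projection $\pi_1\colon \Gamma_f\to X$ admits $x\mapsto (x,f(x))$ as a regular section over the dense open $\dom(f)$, so it is a birational morphism. Applying the same observation to $f^{-1}$, whose graph is obtained from $\Gamma_f$ by the swap $X\times X\to X\times X$, shows that $\pi_2\colon \Gamma_f\to X$ is birational as well.

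Next I would construct the inverse map. Given an irreducible closed $Y\subset X\times X$ with both projections birational, I choose a dense open $U\subset X$ over which $\pi_1$ restricts to an isomorphism $\pi_1^{-1}(U)\to U$, and define $f_Y\colon U\to X$ by $f_Y=\pi_2\circ(\pi_1|_{\pi_1^{-1}(U)})^{-1}$. This yields a rational map $X\dasharrow X$, and the symmetric construction starting from $\pi_2$ produces a rational inverse, so $f_Y\in \Bir(X)$. Injectivity of $f\mapsto \Gamma_f$ is then the standard fact that two rational maps agreeing on a dense open subset coincide. For surjectivity, I would note that the graph of $f_Y$ restricted to $U$ lies inside $Y$ by construction, so $\Gamma_{f_Y}\subseteq Y$; since both $\Gamma_{f_Y}$ and $Y$ are irreducible closed subsets of $X\times X$ of the same dimension $\dim X$ and share a dense locally closed subset, they must coincide.

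For $\Aut(X)$, the same argument goes through with \emph{birational morphism} replaced by \emph{isomorphism}: if $f\in \Aut(X)$, then $x\mapsto (x,f(x))$ and $x\mapsto (f^{-1}(x),x)$ provide global inverses for $\pi_1$ and $\pi_2$ respectively; conversely, if both projections from $Y$ are isomorphisms, then $\pi_2\circ \pi_1^{-1}$ is a global automorphism whose graph is $Y$. The main obstacle I expect is notational rather than conceptual, namely being careful with domains of definition when showing that $f_Y$ is birational and that its graph fills out \emph{all} of $Y$; this is the only place where irreducibility of $Y$ is genuinely used, through the comparison of dimensions.
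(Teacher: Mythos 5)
Your proof is correct and is the standard argument; the paper states this lemma without proof (it appears as a recollection of basic facts about graphs), so there is no authorial proof to compare against. One small imprecision worth noting: a birational morphism $\pi_1\colon Y\to X$ gives an isomorphism from some dense open $W\subseteq Y$ onto a dense open $U\subseteq X$, but $W$ need not equal $\pi_1^{-1}(U)$, so you should phrase the construction of $f_Y$ as $\pi_2\circ(\pi_1|_W)^{-1}$ rather than via the full preimage of $U$. This does not affect the argument, and in fact your surjectivity step needs no dimension count at all: since $W\subseteq \Gamma_{f_Y}$ with $\Gamma_{f_Y}$ closed and $W$ dense in $Y$, one gets $Y\subseteq \Gamma_{f_Y}\subseteq Y$ directly.
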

Applying this to $\Bir_X(A)$ (see Definition~\ref{Defi:BirX}), we obtain the following:
\begin{lemma}\label{Lem:BirXAgraph}
Let $X$ be an irreducible algebraic variety and $A$ be a locally noetherian scheme. We have a bijection
\[\begin{array}{ccc}
\Bir_X(A)&\to&\left\{\begin{array}{l}
\text{irreducible closed subsets }Y\subset A\times X\times X\\
\text{admitting a dense open subset }W\subset Y\\
\text{such that the  projection}  W\to \text{is surjective, and}\\
\text{such that the two projections }  A\times X\times X\to A\times X\\
\text{restrict to open immersions }W \to A\times X.\end{array}\right\},\\
f&\mapsto &\text{closure of }\{(a,x,\pi_2(f(a,x)))\mid (a,x)\in \dom(f)\}.\end{array}\]
\end{lemma}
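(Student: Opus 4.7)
The plan is to extend Lemma~\ref{Lemm:Graphs} to the relative situation by taking relative graphs inside $(A \times X) \times_A (A \times X)$, which is canonically identified with $A \times X \times X$. Throughout, let $p_{12}, p_{13} \colon A \times X \times X \to A \times X$ denote the two projections $(a,x,y) \mapsto (a,x)$ and $(a,x,y) \mapsto (a,y)$.

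First I would construct the forward map, which the statement already suggests. Given $f \in \Bir_X(A)$, choose dense open subsets $U, V \subset A \times X$ with $f|_U \colon U \to V$ an $A$-isomorphism and both $U, V$ projecting surjectively onto $A$ (such $U, V$ exist by the definition of $\Bir_X(A)$). Let $W \subset A \times X \times X$ be the graph of $f|_U$; then $p_{12}|_W \colon W \to U$ is an isomorphism, and $p_{13}|_W = p_2 \circ f|_U \circ (p_{12}|_W)^{-1}$ expresses the other projection as an isomorphism $W \to V$. Setting $Y$ equal to the closure of $W$ in $A \times X \times X$, one obtains an irreducible closed subset (since $W$ is isomorphic to the irreducible $U$) in which $W$ is dense; the openness of the two immersions follows from $U, V$ being open in $A \times X$, and surjectivity of $W \to A$ follows from that of $U \to A$.

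Next I would construct the inverse map. Given a closed irreducible $Y \subset A \times X \times X$ with a dense open $W$ satisfying the three stated properties, set $U = p_{12}(W)$ and $V = p_{13}(W)$, which are open subsets of $A \times X$ by hypothesis. The map $\varphi := p_{13}|_W \circ (p_{12}|_W)^{-1} \colon U \to V$ is an $A$-isomorphism (its $A$-inverse is constructed symmetrically). Since $W$ projects surjectively to $A$ and factors through both $U$ and $V$, both opens project onto $A$, so $\varphi$ extends uniquely to an element of $\Bir_X(A)$.

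Finally I would verify that the two constructions are mutually inverse. Starting with $f$ and passing to $(Y, W)$ and back recovers a birational map agreeing with $f$ on a dense open, hence equal to $f$. Starting with $Y$, the closure of the graph of $\varphi$ is contained in $Y$ and contains the dense open $W$, hence equals $Y$ by irreducibility. The subtlest point, and really the only obstacle, is well-definedness of the inverse construction: different choices of a dense open $W \subset Y$ satisfying the hypotheses might a priori produce different birational maps. This is resolved by noting that any two such opens meet in a dense open of the irreducible $Y$, on which the two resulting $A$-morphisms agree, so by density they define the same $A$-birational transformation. The same observation shows that the data of $W$ is actually determined by $Y$ (one may take the largest such $W$), so the correspondence is canonical.
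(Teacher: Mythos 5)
Your proof is correct and takes essentially the same approach as the paper: both pass through the graph correspondence of Lemma~\ref{Lemm:Graphs} for $A$-birational maps of $A\times X$ and translate the defining condition of $\Bir_X(A)$ (an isomorphism $U\to V$ between opens of $A\times X$ surjecting onto $A$) into the existence of the dense open $W\subset Y$. The one step you leave implicit is that your $W$ (the graph of $f|_U$) is actually \emph{open} in $Y$: since $X$ is separated this graph is closed in $p_{12}^{-1}(U)$, hence equals $Y\cap p_{12}^{-1}(U)$ by density; the paper sidesteps this by defining $W$ as $Y\cap p_{12}^{-1}(U)\cap p_{13}^{-1}(V)$, which is open in $Y$ by construction.
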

\begin{proof}
The set $\Bir_X(A)$ corresponds to a subset of $\Bir(A\times X)$. By Lemma~\ref{Lemm:Graphs}, this latter is in bijection with irreducible closed subsets $Y\subset (A\times X )\times (A\times X)$ such that $\pi_i\colon Y\to A\times X$ is a birational morphism, for $i=1,2$. Moreover, $f\in \Bir(A\times X)$ is sent onto the closure of $\{((a,x),f(a,x))\mid (a,x)\in\dom(f)\}$.

As $\Bir_X(A)$ only consists of $A$-birational maps, we can forget one copy of $A$ and obtain the closure of $\{((a,x),\pi_2(f(a,x)))\mid (a,x)\in\dom(f)\}$ in $A\times X\times X$, which is an irreducible closed subset $Y\subset A\times X\times X$ such that the two projections to $A\times X$ are birational. As before, every such subset provides in turn an $A$-birational map of $A\times X$.

A $A$-birational map $f$ yields an element of $\Bir_X(A)$ if and only if there exist two open subsets $U,V\subset A\times X$, whose projections on $A$ are surjective and such that the map $f$ induces an isomorphism $U\to V$. Denoting by $\mu_1,\mu_2\colon A\times X\times X\to A\times X$ the two projections, the set $W=(\mu_1)^{-1}(U)\cap (\mu_2)^{-2}(V)$ is an open subset of $Y$, and the two projections give isomorphisms $\mu_1\colon W\to U$ and $\mu_2\colon W\to V$. Conversely, the existence of $W$ and of two open embeddings to $A\times X$ yields $U$ and $V$, and thus an element of $\Bir_X(A)$.
\end{proof}

Using these bijections, one can define the subfunctor $\Bir_X^{\mathrm{flat}}$ of $\Bir_X$, corresponding to flat families: 

\begin{definition}(\cite[Definition 2.1]{Han87})
Let $X$ be a projective algebraic variety and $A$ a locally noetherian scheme.
A \emph{flat family of birational transformations} (resp.~\emph{of automorphisms}) of $X$ over $A$ is a closed subscheme $Y\subset A\times X\times X$, flat over $A$, such that for each $a\in A$, the fibre $Y_a$ is the graph of an element of $\Bir_X(a)$ (respectively of $\Aut_X(a)$).
\end{definition}

\begin{definition}
Let $X$ be an algebraic variety and $A$ a locally noetherian scheme. We define $\Bir_X^{\mathrm{flat}}(A)\subset \Bir_X(A)$ as the set of elements $f\in \Bir_X(A)$ such that the corresponding graph in $A\times X\times X$ (see Lemma~\ref{Lem:BirXAgraph}) is a flat family of birational transformations.

We similarly define $\Aut_X^{\mathrm{flat}}(A)=\Bir_X^{\mathrm{flat}}(A)\cap \Aut_X(A)$.
 
\end{definition}
\subsection{Representability of $\Bir_{\p^n}^{\mathrm{flat}}$}\label{Sec:ReprFlat}
As M.~Hanamura explains in \cite[Remark 2.10]{Han87}, the advantage of $\Bir_X^{\mathrm{flat}}$ over $\Bir_X$ is that it is representable.

\begin{remark}
 Recall that $\Hilb(X\times X)$ is an algebraic scheme (locally noetherian but with infinitely many components) that represents the functor $A\to \Hilb_{X\times X}(A)$, where \[\begin{array}{lll}
\Hilb_{X\times X}(A)&=&\left\{\begin{array}{l}
\text{closed subsets }Y\subset A\times X\times X
\text{ that are flat over  }A\end{array}\right\}.\end{array}\]
Hence, $\Aut_X^{\mathrm{flat}}$ and $\Bir_X^{\mathrm{flat}}$ are subfunctors of $\Hilb_{X\times X}$.
\end{remark}
\begin{proposition}[\cite{Han87}]\label{Prop:RepresentBirXFlat}
Let $X$ be an irreducible algebraic variety. For each locally noetherian scheme $A$, 
$\Aut_X^{\mathrm{flat}}(A)$ and $\Bir_X^{\mathrm{flat}}(A)$ are open subsets of $\Hilb_{X\times X}(A)$. Hence, both $\Aut_X^{\mathrm{flat}}$ and $\Bir_X^{\mathrm{flat}}$ are representable by a scheme.
\end{proposition}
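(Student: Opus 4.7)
The plan is to identify $\Aut_X^{\mathrm{flat}}$ and $\Bir_X^{\mathrm{flat}}$ as open subfunctors of the representable functor $\Hilb_{X\times X}$, so that they become automatically representable by open subschemes of $\Hilb(X\times X)$. Concretely, given a locally noetherian scheme $A$ and a flat family $Y\subset A\times X\times X$ (corresponding to a morphism $A\to \Hilb(X\times X)$), I need to show that the set of $a\in A$ for which $Y_a$ is the graph of an automorphism (resp.\ birational transformation) is open in $A$. Let $p_1,p_2\colon Y\to A\times X$ denote the two natural projections; both are proper, as $X$ is projective.

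For $\Aut_X^{\mathrm{flat}}$, I would invoke Lemma~\ref{Lemm:Graphs} applied fiberwise: $Y_a$ is the graph of an automorphism if and only if both $(p_i)_a\colon Y_a\to X$ are isomorphisms. Since $Y$ and $A\times X$ are $A$-flat and $p_i$ is proper, the standard openness of the fiberwise-isomorphism locus for proper morphisms between $A$-flat schemes (EGA~IV) shows that $\{a\in A\mid (p_i)_a\text{ is an isomorphism}\}$ is open; intersecting over $i=1,2$ gives the required open subset.

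For $\Bir_X^{\mathrm{flat}}$, the fiberwise condition is that $Y_a$ is geometrically integral and that both $(p_i)_a$ are birational morphisms. First, openness of the locus of geometrically integral fibers in a proper flat family is standard. Then, I would introduce the open subscheme $W\subset Y$ on which both $p_1$ and $p_2$ are open immersions of $A$-schemes. On an integral fiber $Y_a$, the condition ``$(p_i)_a$ is birational for $i=1,2$'' is equivalent to $W\cap Y_a\neq \emptyset$, since $Y_a$ irreducible and an open nonempty $W\cap Y_a$ imply density, and a proper morphism from an integral variety that is a local isomorphism at some point is birational. Finally, since $W\to A$ is flat and locally of finite presentation (inheriting from $Y\to A$), its image in $A$ is open, and intersecting with the integrality locus provides the desired open subset.

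The hard part will be the nontrivial direction of the equivalence above: upgrading a fiberwise local isomorphism $(p_i)_a$ at a point $y\in Y_a$ to $p_i$ being an open immersion of total spaces in an open neighborhood of $y$. Since $Y\to A$ and $A\times X\to A$ are both flat, the fiberwise flatness criterion promotes $p_i$ to a flat morphism near $y$; combined with $(p_i)_a$ being \'etale and radicial at $y$ (as a local isomorphism between integral schemes), one deduces that $p_i$ is itself \'etale and radicial in a neighborhood of $y$, hence an open immersion there. Once this subtlety is handled, the two open conditions combine to prove that $\Bir_X^{\mathrm{flat}}$ is an open subfunctor of $\Hilb_{X\times X}$, completing the proof.
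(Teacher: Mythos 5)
The paper itself gives no proof of this proposition --- it is quoted from \cite{Han87} --- so I am judging your argument on its own terms. Your overall strategy, namely exhibiting $\Aut_X^{\mathrm{flat}}$ and $\Bir_X^{\mathrm{flat}}$ as open subfunctors of $\Hilb_{X\times X}$ and concluding representability by the corresponding open subschemes of $\Hilb(X\times X)$, is exactly the intended one, and your treatment of $\Aut_X^{\mathrm{flat}}$ (openness of the fibrewise-isomorphism locus for a proper morphism between $A$-flat, $A$-proper schemes) is correct and standard. Note only that projectivity of $X$ must be assumed throughout, both for $\Hilb(X\times X)$ to exist and for the properness of $p_1,p_2$ that you invoke.

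The gap is in the step you yourself flag as the hard part. From ``$p_i$ flat at $y$'' together with ``$(p_i)_a$ \'etale and radicial at $y$'' you cannot conclude that $p_i$ is radicial in a neighbourhood of $y$: universal injectivity does not spread out from a fibre. Concretely, take $A=\A^1_t$, let $Y=V(tz^2-z+x)\subset \A^1_z\times\A^2_{x,t}$ and let $p\colon Y\to \A^2_{x,t}$ be the projection forgetting $z$. Then $Y$ and $\A^2_{x,t}$ are flat over $A$, $p$ is \'etale at $y=(0,0,0)$, and the fibre map $p_0\colon Y_0\to\A^1_x$ is an isomorphism (the fibre over $t=0$ is the graph $z=x$); yet $p$ is an open immersion on no Zariski neighbourhood of $y$, since for $t\neq 0$ the map is $2{:}1$ and $Y$ is irreducible, so no dense open subset of $Y$ maps injectively. (The second root escapes to infinity as $t\to 0$, which is exactly what properness would forbid --- and properness is the hypothesis you are not using at this step.) The repair is standard: since $(p_i)_a$ is proper and birational, it is an isomorphism over a dense open subset of $X$, so you may choose $y$ so that the whole fibre $p_i^{-1}(p_i(y))=(p_i)_a^{-1}((p_i)_a(y))$ is the single reduced point $y$; then, $p_i$ being proper, it is finite over an open neighbourhood of $p_i(y)$ in $A\times X$ (the locus of finite fibres is open in the target, and proper plus quasi-finite equals finite), and after shrinking it is finite, flat and unramified of degree one there, hence an isomorphism onto that neighbourhood of $p_i(y)$. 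With this substitute for the \'etale-plus-radicial step, the rest of your argument --- openness of the geometrically integral locus, openness of the image of $W$ in $A$, and the equivalence on integral fibres between $W\cap Y_a\neq\emptyset$ and birationality of both projections --- goes through.
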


However, $\Bir_X^{\mathrm{flat}}$ has some "nasty properties", as M. Hanumara explains :
"It turns out, however, that the scheme $\Bir(X)$ has some nasty properties; it is not a group scheme in general; even when $X$ and $X'$ are birationally equivalent, $\Bir(X)$ and $\Bir(X')$ may not be isomorphic." Another problem is that the composition law $\Bir(X)\times \Bir(X)\to \Bir(X)$ is not a morphism in general (see Corollary~\ref{Coro:ProductNotMorphism}). The essential reason for these "nasty properties" is that the flatness of the graphs is not invariant under birational maps $X\dasharrow Y$ and even under birational transformations of $X$. 

One example is given in \cite[(2.9)]{Han87}, comparing an abelian variety $A$  of dimension $n\ge 2$ and the blow-up $\tilde{A}\to A$ at one point. Then, $\dim \Bir^\circ(A)=n$ but $\dim \Bir^\circ(\tilde{A})=0$, hence $\Bir(A)$ and $\Bir(\tilde{A})$ are not isomorphic. Moreover, $\Bir(\tilde{A})$ is not even equi-dimensional. 
In $\S\ref{Sec:BirPnflat}$, we will describe more precisely the case of $\p^n$. 

In the case where $\car(\k)=0$ and where $X$ is a terminal model, it is however proved in \cite{Han87} that the scheme obtained has a group scheme structure, compatible with the group structure of $\Bir(X)$. This has been generalised in \cite{Han88}, in the case of non-uniruled varieties.

\begin{theorem}$($\cite[Theorem 2.1]{Han88}$)$\label{ThmHan881}
Let $X$ be a non-uniruled projective variety over an algebraically closed field $\k$ of characteristic $0$, and let us put on $\Bir(X)$ the scheme structure that represents $\Bir_X^{\mathrm{flat}}$ $($see Proposition~$\ref{Prop:RepresentBirXFlat})$. Then, the following hold:

\begin{enumerate}
\item
$\dim \Bir(X)\le \min \{\dim X, q(X)\}$, where $q(X)$ denotes the irregularity of a non-singular model of $X$.
\item
There exists a projective variety $Y$ $($which may be taken non-singular$)$, birational to $X$, such that $\Bir(Y)_{\mathrm{red}}$ has a natural structure of a group scheme, locally of finite type over $\k$.
\item
$\Bir(Y)_{\mathrm{red}}$ contains $\Aut(Y)$ as an open and closed group subscheme; $\Bir^\circ(Y)$ coincides with  $\Aut^\circ(Y)$ and is an abelian variety.
\end{enumerate}
\end{theorem}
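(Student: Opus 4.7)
My approach would combine two independent threads: a structural analysis of connected algebraic groups acting on non-uniruled varieties, and a careful choice of birational model on which flat families of graphs become well-behaved under composition.

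For part (1), I would first regularize the rational action of $G=\Bir^\circ(X)$ on $X$ via Weil--Rosenlicht together with Sumihiro's equivariant completion, obtaining a projective model $X'$ birational to $X$ on which $G$ acts biregularly and faithfully. Since $X'$ is non-uniruled, no $\mathbb{G}_a$ or $\mathbb{G}_m$ subgroup of $G$ can act nontrivially, for its orbits would be rational curves sweeping out $X'$; hence the linear (affine) part in Chevalley's decomposition is trivial and $G$ is an abelian variety. The orbit map through a general point factors through a morphism $G\to\mathrm{Alb}(X')$, which is injective by faithfulness of the action up to finite kernel, giving $\dim G\le q(X)$, while $\dim G\le\dim X'=\dim X$ is immediate from faithfulness.

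For parts (2) and (3), the key is to choose a smooth projective $Y$ birational to $X$ such that every element of $\Bir(Y)$ is a pseudo-isomorphism, i.e.\ an isomorphism in codimension one. This is forced by non-uniruledness: any prime divisor contracted by a birational map between smooth projective varieties is uniruled, so $Y$ being non-uniruled prevents divisorial contractions in either direction. On such a $Y$, the graphs of all elements of $\Bir(Y)$ have both projections to $Y$ that are isomorphisms outside a codimension-two subset; in a flat family over a locally noetherian base, this exceptional locus deforms without jumping in dimension, by semicontinuity. Consequently the composition of two flat families, a priori defined only on the common open locus where both projections are isomorphisms, extends by schematic closure to a flat family of graphs, giving a morphism $\Bir(Y)_{\mathrm{red}}\times\Bir(Y)_{\mathrm{red}}\to\Bir(Y)_{\mathrm{red}}$; together with the inverse (realized by swapping the two copies of $Y$ in the graph), this yields the group scheme structure. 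The subset $\Aut(Y)\subset\Bir(Y)_{\mathrm{red}}$ is cut out by the condition that both projections from the graph be isomorphisms rather than merely pseudo-isomorphisms: it is open by properness of the projections (so the isomorphism locus is open) and closed by upper semicontinuity of fibre dimension of the exceptional locus. Hence $\Aut(Y)$ is open and closed in $\Bir(Y)_{\mathrm{red}}$, so $\Bir^\circ(Y)\subset\Aut(Y)$ and equals $\Aut^\circ(Y)$; applying (1) to $Y$ shows this component is an abelian variety.

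The main obstacle is the composition step in part (2): proving that the rationally defined composition of flat graph families actually extends to a morphism of reduced schemes. This requires uniform control, across a flat family, of the codimension-two indeterminacy loci of pseudo-isomorphisms, which is exactly where non-uniruledness and smoothness of $Y$ enter; on a general variety, divisorial contractions can appear in limits of flat families and destroy both the group structure and invariance under birational change of model, as illustrated by the blow-up of an abelian variety mentioned earlier in the text.
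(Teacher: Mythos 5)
This statement is quoted in the survey from \cite[Theorem 2.1]{Han88} without proof, so your sketch can only be measured against Hanamura's argument and its own internal coherence; measured that way it has two genuine gaps. In part (1) you start from ``the rational action of $G=\Bir^\circ(X)$'', which presupposes that the identity component of the scheme representing $\Bir_X^{\mathrm{flat}}$ is an algebraic group acting rationally on $X$. That is essentially the content of parts (2)--(3), and it fails for the scheme attached to an arbitrary non-uniruled model: as recalled just before the theorem, for the blow-up $\tilde{A}$ of an abelian variety the scheme $\Bir(\tilde{A})$ is not a group scheme and is not equi-dimensional --- its identity component is a point, while other components (the lifts of nonzero translations) have dimension $\dim A$. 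Since the bound in (1) concerns every irreducible component, you must first transport an arbitrary component $T$ to the identity via $t\mapsto f_t\circ f_0^{-1}$ (a morphism for the functor $\Bir_X$, but in general not for $\Bir_X^{\mathrm{flat}}$) and apply Weil--Rosenlicht and the Nishi--Matsumura theorem to the algebraic subgroup generated; your Chevalley/no-$\mathbb{G}_a$,$\mathbb{G}_m$/Albanese endgame is right, but as written the argument is circular.

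The more serious gap is the key claim of part (2): that non-uniruledness of a smooth projective $Y$ forces every element of $\Bir(Y)$ to be an isomorphism in codimension one. This is false, and $\tilde{A}$ is again a counterexample: it is smooth, projective and non-uniruled, yet the lift of any nonzero translation contracts the exceptional divisor. Non-uniruledness only guarantees that any contracted divisor is itself uniruled, and a non-uniruled variety may well contain uniruled divisors. A model on which all birational self-maps are pseudo-isomorphisms is essentially a minimal model with nef canonical class, far beyond the stated hypotheses (and unavailable in 1988). Hanamura's actual route is different: he regularizes the abelian variety produced in (1) so that it acts biregularly on a smooth projective $Y$ (Weil--Rosenlicht plus equivariant completion and resolution in characteristic $0$), and then uses the fact that composition with a flat family of automorphisms preserves flatness of graphs (\cite[Proposition 2.5]{Han87}, the fact invoked in Example~\ref{Example:Flatstandard}) to show that every connected component of $\Bir(Y)_{\mathrm{red}}$ is a coset of $\Aut^\circ(Y)$; the group scheme structure and statement (3) follow from this coset decomposition rather than from composing flat families of pseudo-isomorphisms. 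Finally, your justification of the closedness of $\Aut(Y)$ by ``semicontinuity of the exceptional locus'' points the wrong way: semicontinuity allows the exceptional locus to jump up in a limit, which yields openness of $\Aut(Y)$, whereas closedness is precisely another place where non-uniruledness must be used.
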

\begin{theorem}$($\cite[Theorem 2.2]{Han88}$)$
Let $X$ and $Y$ be as in Theorem~$\ref{ThmHan881}$. Then, the following hold:

\begin{enumerate}
\item
Let $G$ be a group scheme, locally of finite type over $\k$. Then to give a birational action of $G$ on $X$ $($as in \S$\ref{AlgSubgroups})$ is equivalent to a homomorphism of group schemes $G\to \Bir(Y)_{\mathrm{red}}$.
\item
Let $Y'$ be another projective variety birational to $X$ with the property that $\Bir(Y')_{\mathrm{red}}$ is also group scheme. Then, $\Bir(Y)_{\mathrm{red}}$ and $\Bir(Y')_{\mathrm{red}}$ are isomorphic as group schemes.
\end{enumerate}
\end{theorem}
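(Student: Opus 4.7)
The plan is to establish (1) by transport of structure along the given birational map $\psi\colon X\dasharrow Y$, and then derive (2) by applying (1) twice. The remark following Definition~\ref{Defi:BirX} tells us that $\psi$ induces an isomorphism of functors $\Bir_X\simeq\Bir_Y$, so a birational action of $G$ on $X$ --- an element $f\in\Bir_X(G)$ whose $\k$-point map is a group homomorphism --- corresponds to some $f'\in\Bir_Y(G)$ with the same property. The core of (1) is the claim that $f'$ actually lies in the sub-functor $\Bir_Y^{\mathrm{flat}}(G)$; once this is established, Proposition~\ref{Prop:RepresentBirXFlat} converts $f'$ into a morphism of schemes $G\to\Bir(Y)_{\mathrm{red}}$, and the group-homomorphism condition is automatically propagated: the group law on $\Bir(Y)_{\mathrm{red}}$ is a morphism of schemes, and reducedness forces two morphisms with the same $\k$-point data to coincide.

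To check that $f'\in\Bir_Y^{\mathrm{flat}}(G)$, I may replace $G$ by $G_{\mathrm{red}}$ (the target being reduced), and ask for flatness over $G$ of the graph family $\Gamma_{f'}\subset G\times Y\times Y$. The essential input is the rigidity of birational self-maps of a non-uniruled projective variety, already built into Theorem~\ref{ThmHan881}: $\Bir(Y)_{\mathrm{red}}$ is a group scheme locally of finite type whose identity component $\Bir^\circ(Y)=\Aut^\circ(Y)$ is an abelian variety acting by automorphisms, and the graphs of its elements are partitioned according to a discrete invariant (Hilbert polynomial with respect to a fixed polarisation of $Y\times Y$). An algebraic family of graphs of birational self-maps of $Y$ parametrised by a connected reduced component $G_\alpha\subset G$ then has constant Hilbert polynomial, and the standard Hilbert-polynomial criterion over a reduced noetherian base delivers flatness. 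Conversely, a homomorphism of group schemes $G\to\Bir(Y)_{\mathrm{red}}\hookrightarrow\Bir(Y)$ is, via representability, an element of $\Bir_Y^{\mathrm{flat}}(G)\subseteq\Bir_Y(G)$, which $\psi^{-1}$ transports back to a birational action of $G$ on $X$.

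For (2), I apply (1) twice. Taking $X=Y'$, the identity $\id_{\Bir(Y')_{\mathrm{red}}}$ is trivially a group-scheme homomorphism, so by (1) it corresponds to a birational action of $\Bir(Y')_{\mathrm{red}}$ on $Y'$. Composing with the birational equivalence $Y'\dasharrow Y$ transports it to a birational action on $Y$, and re-applying (1) with $X=Y$ yields a group-scheme homomorphism $\Phi\colon\Bir(Y')_{\mathrm{red}}\to\Bir(Y)_{\mathrm{red}}$. Exchanging the roles of $Y$ and $Y'$ produces $\Psi\colon\Bir(Y)_{\mathrm{red}}\to\Bir(Y')_{\mathrm{red}}$. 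Under the canonical identifications of the underlying abstract groups $\Bir(X)=\Bir(Y)=\Bir(Y')$ coming from the fixed birational equivalences, both compositions $\Psi\circ\Phi$ and $\Phi\circ\Psi$ restrict to the identity on $\k$-points; since the target schemes are reduced and locally of finite type over an algebraically closed field, a morphism is determined by its $\k$-point map, so $\Phi$ and $\Psi$ are mutually inverse.

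The hard part will be the flatness step in (1): showing that the family of graphs $\Gamma_{(f')_g}\subset Y\times Y$ parametrised by $g\in G$ has locally constant Hilbert polynomial. This is exactly where non-uniruledness of $Y$ enters --- for uniruled $Y$ the graphs can degenerate with a jump of Hilbert polynomial (e.g.\ when a divisor gets contracted in a limit), which is precisely the source of the pathological behaviour of $\Bir_Y^{\mathrm{flat}}$ for general $Y$. Once flatness is secured, (2) becomes a formal Yoneda-type consequence of (1).
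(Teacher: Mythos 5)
This is a point where a direct comparison is impossible: the paper is a survey and quotes this statement verbatim from \cite[Theorem 2.2]{Han88} without reproducing any proof, so there is no ``paper's own proof'' to measure your attempt against. Judged on its own terms, your architecture is the natural one: transport the action along $\psi\colon X\dasharrow Y$ using the isomorphism of functors $\Bir_X\simeq \Bir_Y$, reduce everything to the claim that the transported family lands in $\Bir_Y^{\mathrm{flat}}(G)$, and then obtain (2) from (1) by the standard Yoneda-type argument with the two mutually inverse homomorphisms $\Phi$ and $\Psi$ determined on $\k$-points. That last part, and the use of reducedness, separatedness and Cartier's theorem in characteristic $0$ to identify morphisms agreeing on closed points, is correct as written.

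The genuine gap is exactly where you locate it, but your proposed justification does not close it. You argue: the graphs of elements of $\Bir(Y)$ are ``partitioned according to a discrete invariant (Hilbert polynomial)'', hence a connected reduced family has constant Hilbert polynomial, hence is flat. The first implication is false in general: the Hilbert polynomial of $\Gamma_{(f')_g}$ is always a discrete invariant of the individual map, but nothing formal prevents it from jumping along a connected family in $\Bir_Y(G)$ --- this is precisely what happens for $Y=\p^n$ (the degree drops on a closed subset, as in the example with $\kappa(t)$ in \S\ref{Sec:BirPnflat}, where the fibre of the total graph over $t=0$ acquires an extra component). Ruling this out for the non-uniruled $Y$ of Theorem~\ref{ThmHan881} is the actual mathematical content of the theorem, and it is not ``already built into'' the statement of Theorem~\ref{ThmHan881}: that theorem describes the scheme representing $\Bir_Y^{\mathrm{flat}}$ and says nothing about families in $\Bir_Y(G)$ that are not a priori flat. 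What is needed (and what Hanamura proves) is a rigidity statement of the shape: any morphism $A\to\Bir(Y)$ from a connected variety through a point $f_0$ is of the form $a\mapsto f_0\circ\alpha(a)$ with $\alpha$ a morphism $A\to\Aut^\circ(Y)$; flatness then follows as in Example~\ref{Example:Flatstandard} via \cite[Proposition 2.5]{Han87}. This rigidity is where non-uniruledness does real work, and your sketch asserts its consequence rather than proving it. As it stands, part (1) --- and hence the whole argument --- rests on an unproved claim.
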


\subsection{The functors $\Bir_{\p^n}^{\mathrm{flat}}$}\label{Sec:BirPnflat}
As explained before, the functor $\Bir_{X}^{\mathrm{flat}}$ is representable by a scheme, for any algebraic variety $X$ (Proposition~\ref{Prop:RepresentBirXFlat}). Let us illustrate the structure that we obtain, in the case where $X=\p^n$. Using the notion of degree of a birational map of $\p^n$ (Definition~\ref{Def:Degree}), one can  define a subfunctor $\Bir_{\p^n}^{\mathrm{deg}}$ of $\Bir_{\p^n}$. For each $A$ we define $\Bir_X^{\mathrm{deg}}(A)\subset \Bir_X(A)$ as the elements $f\in \Bir_X(A)$ such that the corresponding morphism $A\to \Bir_X(A)$ has constant degree on connected components of $A$. Similarly, we can define $\Bir_{\p^n}^{d}$, for each integer $d$, by taking only maps of degree $d$.
\begin{lemma}Let $n\ge 2$ be an integer.
\begin{enumerate}
\item For each $d\ge 1$, the functor $\Bir_{\p^n}^{d}$ is representable by an algebraic variety. This gives to the set $\Bir(\p^n)_d$ a natural structure of algebraic variety.
\item The functor $\Bir_{\p^n}^{\mathrm{deg}}$ is representable by an algebraic scheme.
\end{enumerate}
\end{lemma}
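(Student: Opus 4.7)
The plan is to realise $\Bir(\p^n)_d$ as an explicit quasi-projective variety by parametrising tuples of defining polynomials. Let $V_d$ denote the $\k$-vector space of homogeneous polynomials of degree $d$ in $n+1$ variables, and set $W_d=\p(V_d^{\,n+1})$; a point of $W_d$ is a non-zero $(n+1)$-tuple $[f_0:\cdots:f_n]$ taken up to a common scalar. Let $\widetilde H_d\subseteq W_d$ be the open locus of tuples defining a birational self-map of $\p^n$; up to the scalar action, this is the variety $H_d$ from Section~\ref{SubSec:ZarTop}. Inside $\widetilde H_d$ let $\widetilde H_d^\circ$ be the further open subset where $f_0,\dots,f_n$ share no non-constant common factor: its complement is the union, over $1\le e\le d$, of the images of the proper maps $\p(V_e)\times\p(V_{d-e}^{\,n+1})\to W_d$, hence closed. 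By construction, the map $\widetilde H_d^\circ\to \Bir(\p^n)_d$ sending a tuple to the birational transformation it defines is a bijection of sets.

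For part (1) I would then show that $\widetilde H_d^\circ$ represents the functor $\Bir_{\p^n}^d$. The universal tuple provides a tautological family in $\Bir_{\p^n}^d(\widetilde H_d^\circ)$. Conversely, given $f\in\Bir_{\p^n}^d(A)$, write the corresponding $A$-birational self-map of $A\times\p^n$ as $(a,x)\mapsto (a,\varphi(a,x))$. Working Zariski-locally on $A$, I would represent $\varphi$ by an $(n+1)$-tuple $[F_0:\cdots:F_n]$ of polynomials of some degree $d'\ge d$ in $x$ with coefficients regular on the chosen open of $A$; the hypothesis that each fibre has exact degree $d$ forces $d'=d$ after removing the common polynomial factor, and this removal is algebraic in the parameter because the resulting tuple is fibrewise non-zero. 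Two local representatives on an overlap differ by a regular unit and hence descend to a well-defined morphism $A\to \widetilde H_d^\circ$. The two constructions are mutually inverse, proving representability.

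For part (2), the hypothesis defining $\Bir_{\p^n}^{\deg}(A)$ says that the degree is locally constant on $A$. Decomposing $A$ into the open-closed subschemes $A_d$ on which the degree equals $d$ yields $\Bir_{\p^n}^{\deg}(A)=\bigsqcup_{d\ge 1}\Bir_{\p^n}^d(A_d)$, so the coproduct $\bigsqcup_{d\ge 1}\widetilde H_d^\circ$ represents $\Bir_{\p^n}^{\deg}$. This is an algebraic scheme, locally of finite type with countably many irreducible components, one for each degree.

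The step I expect to be the main obstacle is the local construction of the defining tuple with coefficients regular on $A$: the naive fibrewise division by the common factor must be promoted to an algebraic division over the base, and it is precisely the constant-degree hypothesis that makes this possible. It is also what separates this setting from that of Example~\ref{TheExample}, where jumps in degree at special points of the base are responsible for the non-representability of the full functor $\Bir_{\p^n}$.
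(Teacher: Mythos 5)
Your overall route is the one that actually underlies the paper's proof: the paper disposes of part (1) by citing \cite[Proposition 2.15(b)]{BF13} and of part (2) by taking the disjoint union of the $\Bir(\p^n)_d$, and the remark following the lemma confirms that the cited structure is exactly your parametrisation by tuples $[f_0:\cdots:f_n]$ up to scalar. Your treatment of part (2) is identical to the paper's. So the question is only whether your reconstruction of the cited result is sound, and there is one genuine error in it.

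The claim that $\widetilde H_d$ is \emph{open} in $W_d$ is false. Birationality is not an open condition on tuples: already for $n=2$, $d=2$, a general triple of conics spans a base-point-free net and defines a finite morphism $\p^2\to\p^2$ of topological degree $4$; the birational tuples are those whose net has a length-$3$ base scheme, a positive-codimension condition. The correct statement, and one of the real technical points of \cite{BF13} (their Lemma 2.4), is that $H_d$ is \emph{locally closed} in $W_d$; proving this takes work (one uses the bound $\deg(f^{-1})\le\deg(f)^{n-1}$ to express birationality as a constructible condition via the existence of a candidate inverse of bounded degree, and then refines constructible to locally closed). Since a locally closed subset is still a variety, the rest of your argument is not destroyed, but as written the very first step — that $\widetilde H_d^\circ$ is an algebraic variety at all — is unjustified. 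The other delicate point, extracting from $f\in\Bir_{\p^n}^d(A)$ a local representative of exact degree $d$ over a possibly non-reduced base $A$ and dividing out the common factor algebraically in the parameter, you correctly identify as the main obstacle; it is precisely what the cited proposition establishes, so flagging it without proof is acceptable for a sketch, but it should not be regarded as routine.
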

\begin{proof}
The first part is the statement  of \cite[Proposition 2.15(b)]{BF13}. The second part follows from the first one, by taking the disjoint union of the $\Bir(\p^n)_d$.
\end{proof}
\begin{remark}Note that the structure of algebraic variety of $\Bir(\p^n)_d$ is obtained by associating to an element 
\[\begin{array}{rrcl}
f\colon& \p^n& \dasharrow &\p^n\\
&[x_0:\dots:x_n]&\mapsto &[f_0(x_0,\dots,x_n):\dots:f_n(x_0,\dots,x_n)]\end{array}\]
its coordinates $[f_0:\dots:f_n]$, that lives in the projective space parametrising the $(n+1)$-uples of polynomials of degree $d$, up to scalar multiplication (see \cite{BF13} or \cite[\S1]{BCM} for more details).
\end{remark}
The notion of degree can be generalised: we can associate to any element $f\in \Bir(\p^n)$  a sequence of integers $(d_1,\dots,d_{n-1})$ called \emph{multidegree} of $f$ in \cite{PanMult}, \cite[\S7.1.3]{DolgBook} (or characters in \cite{SemTyr}). By definition, $d_i$ is equal to the degree of $f^{-1}(H_i)$, where $H_i\subset \p^n$ is a general linear subspace of codimension~$i$. In particular, $d_1=\deg(f)$ and $d_{n-1}=\deg(f^{-1})$. Another way to see the multidegree is to observe that the graph $\Gamma_f\subset \p^n\times \p^n$ is equal, in the chow ring of $\p^n\times \p^n$, to $\sum_{i=0}^n d_i h_{n-i,i}$, where $h_{i,j}$ denotes  the class of a linear subspace $\p^i\times \p^j$, where $d_0=d_n=1$ and where $(d_1,\dots,d_{n-1})$ is the multidegree of $f$. See \cite{PanMult} or \cite[\S7.1.3]{DolgBook} for more details.
\begin{lemma}\label{Lemm:Fixeddegree}
Let $n\ge 2$ be an integer.
\begin{enumerate}
\item For each locally noetherian scheme $A$ and each $f\in \Bir_{\p^n}^{\mathrm{flat}}(A)$, the induced morphism $A\to \Bir(\p^n)$ has constant multidegree on connected components of $A$.
\item The functor $\Bir_{\p^n}^{\mathrm{flat}}$ is a subfunctor of $\Bir_{\p^n}^{\mathrm{deg}}$.  
\end{enumerate}\end{lemma}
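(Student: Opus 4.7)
The plan is to reduce the statement to the classical fact that the Hilbert polynomial is locally constant in a flat family, after identifying the multidegree as a cohomological invariant of the graph.

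First, I would recall that the multidegree $(d_1,\dots,d_{n-1})$ of an element $f\in \Bir(\p^n)$ is defined so that the class of the graph satisfies
\[
[\Gamma_f]\;=\;\sum_{i=0}^{n} d_i\, h_{n-i,i}\quad\text{in }A^*(\p^n\times \p^n),
\]
with $d_0=d_n=1$. Pairing with $\mathcal{O}(k,\ell)$ (or using the Koszul resolution of a product of linear subspaces) gives the bigraded Hilbert polynomial of $\Gamma_f$ with respect to $\mathcal{O}_{\p^n\times \p^n}(1,1)$:
\[
\chi\bigl(\Gamma_f,\mathcal{O}(k,\ell)\bigr)\;=\;\sum_{i=0}^{n} d_i\binom{k+n-i}{n-i}\binom{\ell+i}{i}.
\]
Since the polynomials $\binom{k+n-i}{n-i}\binom{\ell+i}{i}$ for $0\le i\le n$ are linearly independent in $\mathbb{Q}[k,\ell]$ (they have distinct bidegrees $(n-i,i)$ in leading terms), the integers $d_i$ can be read off from this polynomial. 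Hence the multidegree of $f$ is determined by, and determines, the bigraded Hilbert polynomial of $\Gamma_f$.

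For part (1), let $f\in \Bir_{\p^n}^{\mathrm{flat}}(A)$. By Lemma~\ref{Lem:BirXAgraph} together with the definition of $\Bir_X^{\mathrm{flat}}$, the associated graph $Y\subset A\times\p^n\times\p^n$ is a closed subscheme, flat over $A$, whose fibre over each $k$-point $a$ is the graph $\Gamma_{f_a}$. The classical theorem on flat families over a locally noetherian base states that the Hilbert polynomial of the fibres, taken with respect to a relatively very ample line bundle, is locally constant on the base; applying this with the pull-back of $\mathcal{O}_{\p^n\times \p^n}(1,1)$ gives that the bigraded Hilbert polynomial of $\Gamma_{f_a}$ is constant on each connected component of $A$. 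Combined with the first step, this forces the multidegree of $f_a$ to be constant on connected components of $A$, which is precisely the claim of (1).

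Part (2) is then immediate: the first component $d_1$ of the multidegree equals $\deg(f_a)$, so constancy of the multidegree on connected components gives constancy of the degree, which is the defining condition for membership in $\Bir_{\p^n}^{\mathrm{deg}}(A)$. The only technical step with any substance is the cohomological identification of the multidegree from the Hilbert polynomial in the first paragraph; once that is in place, everything reduces to the standard flatness theorem.
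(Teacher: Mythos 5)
Your argument is correct and follows essentially the same route as the paper: both reduce the statement to the fact that flatness of the graph family over $A$ forces a numerical invariant of the fibres to be locally constant, and then observe that this invariant determines the multidegree. The paper invokes local constancy of the class of the fibres in the Chow ring of $\p^n\times\p^n$ directly, whereas you pass through the bigraded Hilbert polynomial; your version has the advantage of resting on the most standard form of the flatness theorem (local constancy of Hilbert polynomials over a reduced or connected noetherian base), at the cost of the extra step identifying the $d_i$ inside that polynomial.

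One small imprecision: the displayed identity
\[
\chi\bigl(\Gamma_f,\mathcal{O}(k,\ell)\bigr)=\sum_{i=0}^{n} d_i\binom{k+n-i}{n-i}\binom{\ell+i}{i}
\]
does not hold exactly in general; the right-hand side is the Hilbert polynomial of a union of linear subspaces with these multiplicities, and for an arbitrary $n$-dimensional subvariety with class $\sum d_i h_{n-i,i}$ only the terms of top total degree $n$ agree. (For instance, for $n=2$ and $f=\id$ the right-hand side gives $2x^2+5x+3$ on the diagonal $k=\ell=x$, while the paper records $2x^2+3x+1$.) This does not damage the proof: the coefficients of $k^{n-i}\ell^{i}$ in the degree-$n$ part are $d_i/((n-i)!\,i!)$, so the multidegree is still read off from the leading part of the Hilbert polynomial, and local constancy of the full polynomial in the flat family suffices. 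You might also note explicitly that the fibres $Y_a$ are by definition graphs of birational maps, hence integral of dimension $n$, so there are no embedded or excess components to worry about when extracting the cycle class.
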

\begin{proof}
Let $A$ be a locally noetherian scheme, and let $f\in \Bir_{\p^n}(A)$, which corresponds to a morphism $\rho_f\colon A\to \Bir(\p^n)$, and to an irreducible subset $Y$ of $A\times \p^n\times \p^n$, given by the closure of $\{(a,x,\pi_2(f(a,x)))\mid (a,x)\in \dom(f)\}$ (see Lemma~\ref{Lem:BirXAgraph}). Moreover, $\pi_2(f(a,x))=\rho_f(a)(x)$ for each $(a,x)\in \dom(f)$. 

By definition, $f\in\Bir_{\p^n}^{\mathrm{flat}}(A)$ if and only if $Y$ is flat over $A$ and the fibre of each $a\in A$ is the graph of an element of $\Bir_X(a)$.

 The flatness of $Y$ over $A$ implies that the classes of the fibres $Y_a$ of elements $a\in A$ are locally constant in the chow ring of $\p^n\times \p^n$, and thus that the multidegree of $\rho_f$ is constant on connected components of $A$. In particular, $\Bir_{\p^n}^{\mathrm{flat}}(A)\subset \Bir_{\p^n}^{\mathrm{deg}}(A)$.
\end{proof}
\begin{example}We choose $A=\A^1$ and consider the morphism $\kappa\colon A\to \Bir(\p^2)$ given by
$$\kappa(t)\colon [x:y:z]\mapsto [-xz+ty^2:yz: z^2].$$
For $t\not=0$, $\kappa(t)$ is a quadratic birational involution of $\p^2$, but $\kappa(0)$ is equal to the linear automorphism $[x:y:z]\mapsto [-x:y:z]$. As the degree drops, the corresponding family is not flat over $A=\A^1$.

We can observe this by looking at the corresponding graph: 
\[Y=\left\{
([x:y:z],[X:Y:Z],t)\in A\times \p^2 \times \p^2\left\vert \begin{array}{lll}Yz&=&Zy\\
 Xz^2&=&Z(-xz+ty^2)\\
 Xyz&=&Y(-xz+ty^2)\end{array}\right.\right\}.\]
When $t\not=0$, the fibre $Y_t$ is the graph of $\kappa(t)$, which is an irreducible surface in $\p^2\times \p^2$. When $t=0$, the fibre $Y_0$ is the union of the graph of $\kappa(0)$ and of the surface given by $z=0,Z=0$.
\end{example}
\begin{example}\label{Example:Flatstandard}We choose again $A=\A^1$ and consider the morphism $\nu\colon A\to \Bir(\p^n)$, given by
\[\nu(t)\colon [x_0:\dots:x_n]\mapsto \left[\frac{1}{x_0+tx_n}:\frac{1}{x_2}:\dots:\frac{1}{x_n}\right].\]
This morphism corresponds to the composition of the standard transformation $\nu(0)$ with a family of automorphism and is thus flat by \cite[Proposition 2.5]{Han87}. We can also verify this by looking at the corresponding graph and observing that the fibre of $t\in \A^1$ is the graph of $\nu(t)$.
\end{example}

\begin{corollary}\label{Coro:ProductNotMorphism}
Putting on $\Bir(\p^n)$ the scheme structure provided by the representability of $\Bir_{\p^n}^{\mathrm{flat}}$, the following hold:
\begin{enumerate}
\item
The set $\Bir(\p^n)_d$ is open in $\Bir(\p^n)$, for each $d$.
\item
For $n\ge 2$, the multiplication map $\Bir(\p^n)\times \Bir(\p^n)\to \Bir(\p^n)$ is not a morphism: it is not even continuous.
\end{enumerate}
\end{corollary}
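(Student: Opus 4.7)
For part~(1), the plan is to apply Lemma~\ref{Lemm:Fixeddegree} directly: since the multidegree (and hence the degree) is locally constant on $A$ for every flat family $f\in\Bir_{\p^n}^{\mathrm{flat}}(A)$, the subfunctor of flat families of constant degree $d$ is clopen in $\Bir_{\p^n}^{\mathrm{flat}}$. Passing to the representing schemes exhibits $\Bir(\p^n)_d$ as a clopen subscheme of $\Bir(\p^n)$, and in particular as an open subset.

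For part~(2), I would produce an explicit morphism $\mu\colon \A^1\to\Bir(\p^n)\times\Bir(\p^n)$ such that the composition $m\circ\mu$ has non-open preimage of the open set $\Bir(\p^n)_1=\PGL_{n+1}$ provided by part~(1). Let $\sigma\colon\p^n\dasharrow\p^n$ be the standard Cremona involution $[x_0:\dots:x_n]\dasharrow[x_0^{-1}:\dots:x_n^{-1}]$ (so that $\sigma^2=\id$), and let $\alpha_t\in\PGL_{n+1}$ be the one-parameter family $[x_0:\dots:x_n]\mapsto[x_0+tx_n:x_1:\dots:x_n]$. I would take
\[
\mu(t)=(\sigma,\,\alpha_t\circ\sigma).
\]
The first factor is trivially in $\Bir_{\p^n}^{\mathrm{flat}}(\A^1)$ as a constant family; for the second, its graph in $\A^1\times\p^n\times\p^n$ is the image of the constant family $\A^1\times\Gamma_\sigma$ under the $\A^1$-automorphism $(t,x,y)\mapsto(t,x,\alpha_t(y))$, so it is $\A^1$-isomorphic to a trivial family and flatness is immediate. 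This is essentially the argument behind \cite[Proposition~2.5]{Han87} invoked in Example~\ref{Example:Flatstandard}.

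A short explicit calculation then shows that $m\circ\mu\colon t\mapsto\sigma\circ\alpha_t\circ\sigma$ sends $t=0$ to the identity and every $t\ne 0$ to a birational map of degree~$2$; for $n=2$ this is precisely the phenomenon of Example~\ref{TheExample}, and the same cancellation works for all $n\ge 2$. Hence $(m\circ\mu)^{-1}(\Bir(\p^n)_1)=\{0\}$, which is not open in $\A^1$, contradicting continuity of $m$ by part~(1). I expect the main technical step to be the verification of flatness of $\alpha_t\circ\sigma$; the graph-translate observation above bypasses any direct Hilbert-polynomial computation, after which the degree-drop is routine.
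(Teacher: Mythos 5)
Your proof is correct and takes essentially the same route as the paper: part (1) is deduced from Lemma~\ref{Lemm:Fixeddegree} exactly as in the text, and part (2) composes the standard involution $\sigma$ with a flat one-parameter family so that the product degenerates to the identity at $t=0$, giving $(m\circ\mu)^{-1}(\Bir(\p^n)_1)=\{0\}$. The only (immaterial) difference is that the paper varies the left factor, taking $t\mapsto \nu(t)\circ\nu(0)$ with $\nu(t)=\sigma\circ\beta_t$ flat by precomposition with automorphisms (Example~\ref{Example:Flatstandard}, via \cite[Proposition 2.5]{Han87}), whereas you vary the right factor $\alpha_t\circ\sigma$ and justify its flatness by the graph-translation argument; the resulting composite family $[x_0x_n:x_1(x_n+tx_0):\dots:x_n(x_n+tx_0)]$ is literally the same one as in the paper's proof.
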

\begin{proof}
The part $(1)$ follows from Lemma~\ref{Lemm:Fixeddegree}(1). 

To see $(2)$, we consider the morphism $\nu\colon \A^1\to \Bir(\p^n)$ given in Example~\ref{Example:Flatstandard}. Since the family is flat over $\A^1$, it corresponds to a morphism of schemes. We then define  
\[\begin{array}{rrcl}
\nu'\colon &\A^1& \to & \Bir(\p^n)\\
&t& \mapsto & \nu(t)\circ\nu(0)\end{array}\]
which is a morphism in the sense of Definition~\ref{Defi:Morphisms}, but not a morphism of schemes as it corresponds to an element of $\Bir_{\p^n}(\A^1)\setminus \Bir_{\p^n}^{\mathrm{flat}}(\A^1)$. Indeed, $\nu'(0)$ is the identity, which is of degree $1$, but for $t\not=0$, the element $\nu'(t)\in \Bir(\p^n)$ is the quadratic transformation
\[ \begin{array}{rcl}
[x_0:\dots:x_n]&\mapsto &\left[\frac{1}{1/x_0+t/x_n}:x_1:\dots:x_n\right]\\
&=&\left[x_0x_n:x_1(x_n+tx_0):\dots:x_n(x_n+tx_0)\right].\end{array}\]
In particular, $\nu'$ is not continuous, as $(\nu')^{-1}(\Bir(\p^n)_{1})=\{0\}$ is not open, so the composition map  $\mathrm{mult}\colon\Bir(\p^n)\times \Bir(\p^n)\to \Bir(\p^n)$  is not continuous.
\end{proof}

We finish this text by comparing the two scheme structures on $\Bir(\p^n)$ given by the functors $\Bir_{\p^n}^{\mathrm{flat}}$ and  $\Bir_{\p^n}^{\mathrm{deg}}$.
\begin{lemma} 
The functors $\Bir_{\p^n}^{\mathrm{flat}}$ and $\Bir_{\p^n}^{\mathrm{deg}}$ are not equal if $n\ge 3$.
\end{lemma}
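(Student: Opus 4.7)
The inclusion $\Bir_{\p^n}^{\mathrm{flat}}\subseteq \Bir_{\p^n}^{\mathrm{deg}}$ follows from the previous lemma, whose proof in fact establishes the stronger statement that any $f\in \Bir_{\p^n}^{\mathrm{flat}}(A)$ has locally constant \emph{full} multidegree $(d_1(a),\dots,d_{n-1}(a))$ on connected components of $A$, not merely the first entry $d_1=\deg$. To establish strict inclusion for $n\ge 3$, it therefore suffices to exhibit a morphism $\A^1\to \Bir(\p^n)$ in $\Bir_{\p^n}(\A^1)$ whose $d_1$ is constant but some other $d_i$ jumps, since such a family necessarily lies in $\Bir_{\p^n}^{\mathrm{deg}}(\A^1)\setminus \Bir_{\p^n}^{\mathrm{flat}}(\A^1)$. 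The case $n>3$ then reduces to $n=3$ by the standard embedding of $\Bir(\p^3)$ in $\Bir(\p^n)$ that acts trivially on the extra coordinates.

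For $n=3$, the plan is to construct a one-parameter family $f_t\in \Bir(\p^3)$ of cubic birational maps with $\deg f_t = 3$ constant in $t$ but $\deg f_t^{-1}$ dropping at $t=0$. The explicit recipe is to perturb a fixed cubic Cremona transformation of $\p^3$ of bidegree $(3,d_2)$ with $d_2>3$ so that it specializes at $t=0$ to the standard Cremona $\sigma$ of bidegree $(3,3)$. Concretely, write the perturbed map as a pencil of $4$-tuples of homogeneous cubics, equal to $\sigma$ at $t=0$ and equal to $\sigma + t\cdot h$ for $t\ne 0$, where $h$ is a generic $4$-tuple of cubic forms; for sufficiently generic $h$ the tuple defines a birational map $f_t$ of $\p^3$ of degree exactly $3$ for all $t$, while its inverse has degree $d_2>3$ away from $t=0$. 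The existence of such $h$ reduces to the known fact, appearing in the literature on multidegrees and Semple--Tyrrell characters of Cremona transformations of $\p^3$ referenced in this section, that cubic birational maps of $\p^3$ whose inverse has degree strictly larger than $3$ do exist.

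The main obstacle is ensuring simultaneously that for generic $h$ the perturbed map stays birational of fixed degree $3$ and that its inverse degree genuinely drops at $t=0$. Translated into graphs, one must verify that the limit of the graphs $\Gamma_{f_t}$ as $t\to 0$ decomposes as $\Gamma_\sigma$ plus an extra effective cycle whose Chow class contributes only to $h_{1,2}$ and not to $h_{2,1}$: the $h_{2,1}$-coefficient must remain equal to $3$ throughout (constancy of $\deg f_t$), whereas the $h_{1,2}$-coefficient drops from $d_2$ to $3$ at the special fibre (jump of $\deg f_t^{-1}$). This is precisely the Chow-theoretic manifestation of the failure of flatness despite constancy of degree, and it yields the desired element of $\Bir_{\p^3}^{\mathrm{deg}}(\A^1)\setminus \Bir_{\p^3}^{\mathrm{flat}}(\A^1)$, concluding the proof.
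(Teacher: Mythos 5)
Your overall strategy is the same as the paper's: both arguments exhibit a one-parameter family of constant degree whose inverse has non-constant degree, and then invoke Lemma~\ref{Lemm:Fixeddegree} (constancy of the full multidegree, in particular of $d_{n-1}=\deg(f^{-1})$, for flat families) to conclude that the family lies in $\Bir_{\p^n}^{\mathrm{deg}}(\A^1)\setminus\Bir_{\p^n}^{\mathrm{flat}}(\A^1)$. The reduction from $n>3$ to $n=3$ is also fine. The problem is the construction of the witness family itself.

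The step ``for sufficiently generic $h$ the tuple $\sigma+t\cdot h$ defines a birational map of degree $3$'' is false. A generic $4$-tuple of cubic forms on $\p^3$ defines a dominant rational map of topological degree $27$, not a birational one; the locus of $4$-tuples defining Cremona transformations is a proper, very special subvariety of the parameter space, so no genericity argument can keep the pencil inside it. Your proposal identifies this as ``the main obstacle'' but then does not resolve it: the cited existence of cubo-quartic or cubo-quintic transformations of $\p^3$ does not produce a pencil of birational maps of constant degree $3$ degenerating onto the standard $\sigma$, and constructing such a pencil (and computing the Chow class of the limit of the graphs) is precisely the content that is missing. The paper sidesteps all of this with a completely explicit and elementary family: the triangular automorphisms of $\A^n$ given by $\xi(t)\colon(x_1,\dots,x_n)\mapsto(x_1+x_2^2,\,x_2+tx_3^2,\,x_3,\dots,x_n)$, extended to $\p^n$. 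Here birationality is automatic (these are automorphisms of $\A^n$ with explicit inverses), $\deg\xi(t)=2$ for all $t$, while $\deg\xi(t)^{-1}=4$ for $t\neq 0$ and $=2$ for $t=0$. If you replace your perturbation of $\sigma$ by such an explicit affine construction, your argument closes up and coincides with the paper's.
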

\begin{proof}
If $n\ge 3$, we can easily find some families of constant degree but having inverse of non-constant degree. This shows that the functors $\Bir_{\p^n}^{\mathrm{flat}}$ and $\Bir_{\p^n}^{\mathrm{deg}}$ are not equal. Take for example the family of automorphisms of $\A^n$ given by
$$\begin{array}{llll}
\xi(t)\colon&(x_1,\dots,x_n)&\mapsto& (x_1+(x_2)^2,x_2+t(x_3)^2,x_3,\dots,x_n),\\
\xi(t)^{-1}\colon&(x_1,\dots,x_n)&\mapsto& (x_1-(x_2-t(x_3)^2)^2,x_2-t(x_3)^2,x_3,\dots,x_n),\end{array}
$$
and extend it to a family of birational transformations of $\p^n$. We then find $\deg(\xi(t))=2$, $\deg(\xi(t)^{-1})=4$ for each $t\not=0$, but $\deg(\xi(0))=\deg(\xi(0)^{-1})=2$.

\end{proof}

\begin{remark}
It seems to us that $\Bir_{\p^2}^{\mathrm{flat}}=\Bir_{\p^2}^{\mathrm{deg}}$. One reason for this is that the Hilbert polynomial of the graph of an element $f\in \Bir(\p^2)_d$ is $P(x)=x^2(d+1)+3x+1$ (when we view this graph in $\p^8$ via the Segre embedding $\p^2\times \p^2\to \p^8$), and is then only dependent of degree $d$.
\end{remark}

\begin{question}Is $\Bir_{\p^n}^{\mathrm{flat}}$ corresponding to algebraic families with a fixed multidegree $($on connected components$)$?\end{question}

\bibliographystyle{amsalpha}

\end{document}